\newtheorem{theorem}{Theorem}[section]
\newtheorem{lemma}[theorem]{Lemma}
\newtheorem{remark}[theorem]{Remark}
\newtheorem{corollary}[theorem]{Corollary}
\newcounter{mnotecount}[section]
\begin{document}

\title[Bakry-\'Emery]{Cosmological singularities in Bakry-\'Emery spacetimes}

\author{Gregory J Galloway}
\address{Dept of Mathematics, University of Miami, Coral Gables, Coral Gables, FL 33146, USA}
\email{galloway@math.miami.edu}

\author{Eric Woolgar}
\address{Department of Mathematical and Statistical Sciences,
University of Alberta, Edmonton, Alberta, Canada T6G 2G1}
\email{ewoolgar(at)ualberta.ca}

\date{\today}

\begin{abstract}
\noindent We consider spacetimes consisting of a manifold with Lorentzian metric and a weight function or scalar field. These spacetimes admit a Bakry-\'Emery-Ricci tensor which is a natural generalization of the Ricci tensor. We impose an energy condition on the Bakry-\'Emery-Ricci tensor and obtain singularity theorems of a cosmological type, both for zero and for positive cosmological constant. That is, we find conditions under which every timelike geodesic is incomplete. These conditions are given by ``open'' inequalities, so we examine the borderline (equality) cases and show that certain singularities are avoided in these cases only if the geometry is rigid; i.e., if it splits as a Lorentzian product or, for a positive cosmological constant, a warped product, and the weight function is constant along the time direction. Then the product case is future timelike geodesically complete while, in the warped product case, worldlines of certain conformally static observers are complete. Our results answer a question posed by J Case. We then apply our results to the cosmology of scalar-tensor gravitation theories. We focus on the Brans-Dicke family of theories in 4 spacetime dimensions, where we obtain ``Jordan frame'' singularity theorems for big bang singularities.
\end{abstract}

\maketitle

\section{Introduction}
\setcounter{equation}{0}

\noindent The singularity theorems of general relativity (see, e.g., \cite{HE}) are arguably some of the deepest statements in modern science. They imply that the universe has a finite history, beginning in what has come to be called a big bang singularity, provided that we assume that we can reliably extrapolate certain features of the known laws of physics back to early times and high energy scales.

Current theories of high energy physics, such as string theoretic models and theories with Kaluza-Klein dimensional reduction, postulate the presence of fundamental scalar fields such as the dilaton field, in addition to the spacetime metric. Modern cosmological models also sometimes employ scalar fields for a number of reasons. There are a variety of ways to couple scalar fields to general relativity. Scalar fields can be incorporated into the matter stress-energy tensor, as is commonly done to produce models of inflationary cosmology, or they can couple in more intricate ways, as happens with dilaton scalar fields and, more generally, in scalar-tensor gravitation theories. The prototypical examples of scalar-tensor gravitation theories are the members of the Brans-Dicke family of theories \cite{BD, Faraoni}.

There is a straightforward way to obtain singularity theorems in scalar-tensor gravitation theories. The metric of such a theory can be expressed using a conformal choice that makes the equation governing the metric closely resemble the Einstein equation of general relativity. This is called the \emph{Einstein frame formulation}. The singularity theorems of general relativity can be applied quite directly in this formulation. However, this raises interpretive issues. For example, the constants of nongravitational physics are not constant in inertial frames of this metric \cite{Faraoni}. One can transform back to a metric in whose inertial frames the constants of physics are constant---this is called the \emph{Jordan frame formulation}---but the power of the theorem can be diminished in the process. A more satisfactory approach would be to develop Jordan frame singularity theorems directly.

This has now become possible, thanks to recent developments in the comparison theory of the \emph{Bakry-\'Emery-Ricci} curvature tensor (or simply \emph{Bakry-\'Emery tensor}) (see, e.g., \cite{Lott, WW}). This tensor is defined in terms of the familiar Ricci tensor of a metric $g$ and an additional \emph{weight function} $f$ by
\begin{equation}
\label{eq1.1} {\rm Ric}_f:={\rm Ric} + {\rm Hess} f\ .
\end{equation}
Our particular interest is the case where the metric $g$ is Lorentzian. This was studied in \cite{Case}.

Motivated by the work of \cite{Case} as well as by the cosmology of scalar-tensor gravitation theories, we consider two singularity theorems for Lorentzian metrics with Bakry-\'Emery-Ricci tensor obeying a positivity condition of a form commonly called an energy condition, and with weight function $f$ bounded above. The extrinsic geometry of hypersurfaces has a Bakry-Emery generalization; see section 2 and in particular equation (\ref{eq2.4}) for the definition of Bakry-Emery mean curvature $H_f$. Then we have the following theorems.

\begin{theorem}\label{theorem1.1}
Let $M$ be a spacetime satisfying ${\rm Ric}_f(X,X) \ge 0$ for all timelike vectors
$X$, and suppose that $f \le k$.\footnote
{This is necessary, for consider the Einstein static universe $-dt^2+g(S^{n-1},{\rm can})$ with $f=e^t$. Then a simple calculation yields ${\rm Ric}_f(X,X)\ge e^t>0$, while $H_f=-e^t<0$ for any constant-$t$ hypersurface, yet this spacetime is geodesically complete.}
Let $S$ be a smooth compact spacelike Cauchy surface for $M$ with strictly negative $f$-mean curvature $H_f(S) < 0$. Then every timelike geodesic is future incomplete.
\end{theorem}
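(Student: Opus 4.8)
The plan is to run a Hawking-type argument: using that $S$ is a compact Cauchy surface, I reduce future incompleteness to a uniform bound on the Lorentzian distance $d_S$ to $S$, and I obtain that bound by showing that every future-directed unit-speed geodesic $\gamma$ issuing normally from $S$ must develop a focal point to $S$ within a proper time $\tau_0$ that is independent of the starting point. Write $\theta$ and $\sigma$ for the expansion and shear of the normal congruence, $\mathcal{A}$ for its area element, $n=\dim S$, and $\phi:=f\circ\gamma$. The Bakry-\'Emery Raychaudhuri equation attached to (\ref{eq2.4}) reads $\theta_f'=-{\rm Ric}_f(\gamma',\gamma')-|\sigma|^2-\tfrac1n\theta^2$, where $\theta_f:=\theta-\phi'$ is the $f$-expansion, so that $\theta_f(0)=H_f(S)$. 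The energy condition gives $\theta_f'\le-\tfrac1n\theta^2\le0$, and compactness of $S$ yields a constant $c>0$ with $H_f(S)\le-c$, hence $\theta_f(0)\le-c$.

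I expect the crux to be that the evolution of $\theta_f$ is driven by $\theta^2$ rather than $\theta_f^2$: since only ${\rm Ric}_f$, and not ${\rm Ric}$, is controlled, $\mathcal{A}^{1/n}$ need not be concave and the classical focal comparison fails. This is exactly where $f\le k$ enters. Using $\theta=\theta_f+\phi'$ one has $\theta^2\ge\theta_f^2+2\theta_f\phi'$, so that $\theta_f'+\tfrac2n\phi'\theta_f\le-\tfrac1n\theta_f^2$. Multiplying by the integrating factor $e^{2\phi/n}$ and setting $g:=e^{2\phi/n}\theta_f$ turns this into the scalar Riccati inequality $g'\le-\tfrac1n e^{-2\phi/n}g^2\le-\tfrac1n e^{-2k/n}g^2$, where the last step uses precisely $\phi\le k$. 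Since $g(0)=e^{2\phi(0)/n}\theta_f(0)\le-c\,e^{2(\min_S f)/n}<0$ uniformly in the starting point, the quantity $p:=-1/g>0$ satisfies $p'\le-\tfrac1n e^{-2k/n}$, so $p$ reaches $0$, i.e. $g$ and hence $\theta_f$ blow up to $-\infty$, no later than $\tau_0:=\tfrac{n}{c}\,e^{2(k-\min_S f)/n}$. Because $\phi\le k$ and $\phi$ is continuous (hence bounded on the compact segment), the blow-up of $\theta_f$ is equivalent to the vanishing of $\mathcal{A}$, i.e. to a genuine focal point of $S$ along $\gamma$, at a parameter $\le\tau_0$ with $\tau_0$ independent of $\gamma$.

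It is worth noting that the sign in the hypothesis is forced: with the convention ${\rm Ric}_f={\rm Ric}+{\rm Hess}f$ one necessarily has $\theta_f=\theta-\phi'$, the integrating factor is $e^{2\phi/n}$, and only an \emph{upper} bound on $f$ renders its reciprocal coefficient bounded below by a positive constant. This is consistent with the footnote's example $f=e^t$, which is unbounded above and geodesically complete; there $\phi'\to+\infty$ and the constant $e^{-2k/n}$ degenerates, so no uniform $\tau_0$ exists.

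Finally I invoke the standard Cauchy-surface machinery. Since $S$ is a compact spacelike Cauchy surface, each $q\in J^+(S)$ is joined to $S$ by a maximal geodesic meeting $S$ orthogonally, and a maximizer carries no focal point to $S$ before $q$; as its expansion therefore stays finite up to the endpoint while the Riccati blow-up above forbids this beyond $\tau_0$, its length is at most $\tau_0$, whence $d_S\le\tau_0$ on all of $J^+(S)$. Now let $\gamma$ be any future-directed timelike geodesic. As $S$ is Cauchy, $\gamma$ meets $S$, say at parameter $t_0$; for $t>t_0$ the segment $\gamma|_{[t_0,t]}$ is a timelike curve from $S$ to $\gamma(t)\in J^+(S)$ of length $t-t_0$, so $t-t_0\le d_S(\gamma(t))\le\tau_0$. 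Hence $\gamma$ cannot be extended beyond proper time $t_0+\tau_0$ to the future, i.e. $\gamma$ is future incomplete. The integrating-factor reduction producing a genuine finite-time Riccati blow-up from the bound $f\le k$ is the heart of the argument; the concluding causal step is routine once the uniform focal bound $\tau_0$ is secured.
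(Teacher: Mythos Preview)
Your proof is correct and follows essentially the same route as the paper: the paper's Lemma~2.1 is exactly your integrating-factor Riccati argument (set $x=\theta_f/n$ to match), yielding the same uniform focal-time bound $\tau_0=\frac{n}{c}e^{2(k-\min_S f)/n}$, and your concluding distance bound $d_S\le\tau_0$ is the direct version of the paper's contrapositive Lemma~2.4. The only cosmetic differences are your convention $n=\dim S$ (the paper uses $n=\dim M$, hence $n-1$ where you write $n$) and that you argue directly rather than by contradiction; neither affects the substance.
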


\begin{theorem}\label{theorem1.2}
Let $M$ be a spacetime satisfying ${\rm Ric}_f(X,X) \ge -(n-1)$ for all unit timelike vectors $X$, and having smooth compact spacelike Cauchy surface $S$. Suppose that either
\begin{enumerate}
\item[(i)] $f \le k$ and the $f$-mean curvature of $S$ satisfies
\begin{equation}
\label{eq1.2} H_f(S) < -(n-1)e^{\frac{2(k- N)}{(n-1)}}  \,,
\end{equation}
where $N = \inf_{p \in S} f(p)$, or
\item[(ii)] $\nabla f$ is future causal and the $f$-mean curvature of $S$ satisfies $H_f(S) < -(n-1)$.
\end{enumerate}
Then every timelike geodesic is future incomplete.
\end{theorem}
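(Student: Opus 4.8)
The plan is to prove both parts of Theorem 1.2 by a Bakry-Émery Riccati/focusing argument along the timelike geodesic congruence normal to $S$. Let me think carefully about how this generalizes the classical Hawking-type cosmological singularity theorem to the weighted setting with a negative lower Ricci bound.

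**Setting up the comparison.** The classical approach: pick a future-directed timelike unit geodesic $\gamma$ emanating normally from the Cauchy surface $S$, and study the expansion $\theta$ (mean curvature of the normal hypersurfaces) along $\gamma$. The Raychaudhuri equation gives $\theta' = -\operatorname{Ric}(\gamma',\gamma') - |\sigma|^2 - \theta^2/(n-1) \le -\operatorname{Ric}(\gamma',\gamma') - \theta^2/(n-1)$. In the weighted setting we must instead track the $f$-mean curvature $H_f = \theta + \partial_{\gamma'} f$ (i.e. $\theta - f'$ along $\gamma$, with appropriate sign convention from (\ref{eq2.4})), because the energy condition is on $\operatorname{Ric}_f = \operatorname{Ric} + \operatorname{Hess} f$, not on $\operatorname{Ric}$.

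Let me work out the Bakry-Émery Riccati inequality. Along $\gamma$ write $f' = \partial_t f$. The Bousso-style $f$-mean curvature evolves by
$$ H_f' = \theta' - f'' = -\operatorname{Ric}(\gamma',\gamma') - |\sigma|^2 - \frac{\theta^2}{n-1} - f''. $$
Since $f'' = \operatorname{Hess} f(\gamma',\gamma')$, the first and last terms combine into $-\operatorname{Ric}_f(\gamma',\gamma')$. Using the hypothesis $\operatorname{Ric}_f(\gamma',\gamma') \ge -(n-1)$ and dropping $|\sigma|^2 \ge 0$,
$$ H_f' \le (n-1) - \frac{\theta^2}{n-1}. $$

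Now comes the essential difficulty and the split between cases (i) and (ii): the inequality involves $\theta^2$ but we are controlling $H_f = \theta - f'$, so we need to bound $\theta^2$ from below in terms of $H_f$. This is exactly where the two hypotheses enter.

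**Case (ii), the easy case.** When $\nabla f$ is future causal, along a future unit timelike $\gamma$ we have $f' = \langle \nabla f, \gamma'\rangle \ge 0$, so $\theta = H_f + f' \ge H_f$. Wherever $H_f \le 0$ (which holds initially since $H_f(S) < -(n-1) < 0$, and persists while $\theta$ stays sufficiently negative), this gives $\theta^2 \ge H_f^2$, hence
$$ H_f' \le (n-1) - \frac{H_f^2}{n-1} = -\frac{1}{n-1}\left( H_f^2 - (n-1)^2 \right). $$
With initial condition $H_f(0) < -(n-1)$, standard Riccati comparison shows $H_f \to -\infty$ in finite affine time, which forces $\theta \to -\infty$ and hence a conjugate point; incompleteness of every future timelike geodesic then follows from the maximality/Cauchy-surface argument exactly as in the classical theorem (using that $S$ is a compact Cauchy surface so that the normal congruence is realized by a maximal geodesic to $S$).

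**Case (i), the hard case.** Here $\nabla f$ need not be future causal, so we cannot simply bound $f'$ by its sign. Instead I would exploit $f \le k$ and $N = \inf_S f$ to integrate the weight. The main obstacle is precisely controlling the sign/size of $f'$ along $\gamma$ without monotonicity: the trick is to introduce a reparametrization or an integrating factor of the form $e^{-\frac{2f}{n-1}}$ that absorbs the $f'$ contribution. Concretely, I expect one defines $\tilde H = e^{\frac{2f}{n-1}} \cdot (\text{something})$ or rescales affine parameter $ds = e^{-\frac{2f}{n-1}} dt$ so that the Riccati inequality for the rescaled quantity closes purely in terms of $H_f$ with the constant-curvature model, and the bound $f \le k$, $f|_S \ge N$ converts $H_f(S) < -(n-1)$ into the sharper threshold $-(n-1)e^{2(k-N)/(n-1)}$ appearing in (\ref{eq1.2}). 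The factor $e^{2(k-N)/(n-1)}$ is exactly the worst-case ratio of the conformal factor over the geodesic, which is what I would verify makes the comparison ODE blow up before the reparametrized geodesic reaches infinite length. Once finite-length blow-up of $\theta$ is established, future incompleteness of every timelike geodesic follows as before via the compact Cauchy surface and the existence of length-maximizing normal geodesics. The delicate accounting of the $e^{2(k-N)/(n-1)}$ factor in case (i) is the step I expect to be the crux.
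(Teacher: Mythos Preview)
Your outline for case (i) is essentially the paper's strategy: introduce the integrating factor $y=e^{\frac{2(f-k)}{n-1}}\,\frac{H_f}{n-1}$, so that the Riccati inequality becomes $y'\le e^{\frac{2(f-k)}{n-1}}-e^{-\frac{2(f-k)}{n-1}}y^{2}\le 1-y^{2}$ once $f\le k$ is used; the hypothesis \eqref{eq1.2} together with $f|_S\ge N$ then gives $y(0)<-1$ (uniformly by compactness), and finite-time blow-up follows. You have identified the correct mechanism, though the details remain to be written.

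Case (ii), however, contains a sign error that breaks the argument as stated. In Lorentzian signature, if $\nabla f$ is future causal and $\gamma'$ is future timelike, then $f'=g(\nabla f,\gamma')\le 0$, not $\ge 0$. With your claimed sign you get $\theta=H_f+f'\ge H_f$, and from $H_f\le 0$ alone you \emph{cannot} conclude $\theta^{2}\ge H_f^{2}$ (take $\theta=0$, $H_f=-5$). The inequality you need simply does not follow from your chain. Fortunately the correct sign rescues everything: $f'\le 0$ gives $\theta\le H_f$, so whenever $H_f<0$ one has $\theta\le H_f<0$ and hence $\theta^{2}\ge H_f^{2}$; then $H_f'\le (n-1)-\frac{H_f^{2}}{n-1}$ holds and persistence of $H_f<-(n-1)$ plus Riccati comparison give the blow-up. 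So your approach is salvageable but not correct as written.

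For comparison, the paper does not argue via $\theta^{2}\ge H_f^{2}$ in case (ii). Instead it observes that $f'\le 0$ forces $f$ to be nonincreasing along $\gamma$, so $f_p(t)\le f_p(0)=:k_p$; this feeds directly into the same integrating-factor lemma used for case (i), with the simplification that $k_p-f_p(0)=0$ reduces the threshold to $H_f<-(n-1)$. Your (corrected) route is slightly more direct for (ii) but does not unify the two cases; the paper's route handles both via one lemma.
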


Inequalities appear in the assumptions of these singularity theorems. The inequalities for $H_f$ are open conditions. It is then natural to ask what happens in the borderline cases. The answers are contained in the following rigidity theorems.

\begin{theorem}\label{theorem1.3}
Let $(M,g)$ be a spacetime satisfying ${\rm Ric}_f(X,X) \ge 0$ for all timelike vectors
$X$, and suppose that $f \le k$. Let $S$ be a smooth compact spacelike Cauchy surface for $M$ having $f$-mean curvature $H_f(S) \le 0$. If $M$ is future timelike geodesically complete the future of $S$ splits; i.e., $(J^+(S),g)$ is isometric to $([0,\infty) \times S, -dt^2 \oplus h)$, where $h$ is the induced metric on $S$, and $f$ is independent of $t$.
\end{theorem}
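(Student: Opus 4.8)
The plan is to run a cosmological splitting argument: flow $S$ forward along its future-directed unit normal geodesic congruence, use the weighted Raychaudhuri equation together with future completeness and Theorem~\ref{theorem1.1} to force every inequality to be saturated, and then read off the product structure in the resulting Gaussian normal coordinates.

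First I would set up the geometry. Let $u$ be the future-directed unit normal to $S$ and let $\Phi(t,p)=\exp_p(tu(p))$ be the normal exponential map; by future timelike geodesic completeness $\Phi$ is defined on $[0,\infty)\times S$. Writing $\gamma_p(t)=\Phi(t,p)$, let $B$ be the shape operator of the flowed hypersurfaces, $\theta=\mathrm{tr}\,B$ its expansion, and $\theta_f:=\theta-u(f)$ the $f$-expansion, so that $\theta_f(0,\cdot)=H_f(S)$. Using $\nabla_u u=0$ one computes $(u(f))'=\mathrm{Hess}\,f(u,u)$, and combining this with the trace of the Riccati equation gives the weighted Raychaudhuri identity
\[
\theta_f' = -\mathrm{tr}(B^2)-\mathrm{Ric}_f(u,u)\le -\tfrac{1}{n-1}\theta^2\le 0 ,
\]
where I used $\mathrm{tr}(B^2)\ge \theta^2/(n-1)$ and the energy condition $\mathrm{Ric}_f(u,u)\ge 0$. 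In particular $\theta_f(\cdot,p)$ is nonincreasing and $\le H_f(p)\le 0$. The desired splitting is equivalent to the three pointwise equalities $B\equiv 0$, $\mathrm{Ric}_f(u,u)\equiv 0$ and $u(f)\equiv 0$ on all of $J^+(S)$: indeed $B\equiv 0$ makes the induced metric $t$-independent in Gaussian normal coordinates, and then $u(f)=\theta-\theta_f=0$ makes $f$ independent of $t$.

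The crux is to show that future completeness forces these equalities, i.e. that the Raychaudhuri inequality is everywhere saturated and no focal points occur. Where $\theta_f$ stays identically zero along $\gamma_p$ the equation already yields $\mathrm{tr}(B^2)=\mathrm{Ric}_f(u,u)=0$, hence $B=0$ and $u(f)=0$, which is exactly the rigid behavior. The remaining possibility is that $\theta_f$ becomes strictly negative along some geodesics. To exclude this I would propagate the borderline condition: the flowed surfaces $S_t=\Phi(\{t\}\times S)$ are compact spacelike Cauchy surfaces with $f$-mean curvature $\theta_f(t,\cdot)\le 0$, and if $\theta_f$ were strictly negative on a \emph{full} such surface then Theorem~\ref{theorem1.1} would make $M$ future incomplete, a contradiction. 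The difficulty is that $\theta_f$ may vanish at some points of $S$ and be negative at others, so Theorem~\ref{theorem1.1} cannot be applied directly; this is precisely the borderline scenario the theorem is about. I would bridge the gap with a strong maximum principle of Eschenburg--Galloway type for the (weighted) mean-curvature operator: the hypersurfaces satisfy a second-order elliptic comparison coming from the Riccati equation, and the bound $f\le k$ controls the weight term in that comparison in the same way the factor $e^{2(k-N)/(n-1)}$ does in Theorem~\ref{theorem1.2}(i). The maximum principle then propagates the equality $H_f=0$ from any one zero of $\theta_f$ across all of $S$ and forward in $t$, giving $\theta_f\equiv 0$ and the absence of focal points. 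I expect this propagation to be the main obstacle, both because it must be carried out in the weighted setting and because it must survive passage to limits of smooth hypersurfaces where focal points threaten to form.

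Finally I would assemble the splitting. The vanishing of $B$ makes $\Phi$ a local isometry onto a family of mutually isometric, totally geodesic, compact spacelike Cauchy surfaces $S_t$, so $\Phi$ is injective and hence a diffeomorphism of $[0,\infty)\times S$ onto its image, which is $J^+(S)=D^+(S)$ since $S$ is Cauchy. In the associated Gaussian normal coordinates $g=-dt^2+h_t$ with $\partial_t h_t=2B=0$, so $h_t\equiv h$, the induced metric on $S$; thus $(J^+(S),g)$ is isometric to $([0,\infty)\times S,-dt^2\oplus h)$. Since $u(f)=\partial_t f=0$, the weight $f$ is independent of $t$, completing the proof.
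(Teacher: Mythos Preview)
Your overall framework matches the paper's: set up Gaussian normal coordinates, use the weighted Raychaudhuri inequality to propagate $H_f\le 0$ forward, invoke Theorem~\ref{theorem1.1} to rule out strict negativity, and then read off $B\equiv 0$, $u(f)\equiv 0$ from equality in Raychaudhuri. You also correctly isolate the crux: the borderline case in which $\theta_f(t_0,\cdot)$ is zero at some points of $S_{t_0}$ and strictly negative at others, so Theorem~\ref{theorem1.1} cannot be applied directly.

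The gap is in your proposed resolution of that crux. You appeal to ``a strong maximum principle of Eschenburg--Galloway type for the (weighted) mean-curvature operator'' and say that ``the hypersurfaces satisfy a second-order elliptic comparison coming from the Riccati equation.'' But the Raychaudhuri/Riccati equation is a family of ODEs along the individual normal geodesics; it contains no spatial Laplacian and therefore provides no mechanism to propagate the equality $H_f=0$ \emph{across} $S$ from one point to another. An Eschenburg--Galloway maximum principle compares two hypersurfaces touching at a point via an elliptic barrier, which is not the situation here. Your reference to the factor $e^{2(k-N)/(n-1)}$ from Theorem~\ref{theorem1.2}(i) is also misplaced, since that estimate belongs to the $\mathrm{Ric}_f\ge -(n-1)$ setting, not the present $\mathrm{Ric}_f\ge 0$ case. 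As written, nothing in your argument excludes the possibility that $H_f\equiv 0$ on an open subset of $S$ while $H_f<0$ elsewhere, persisting for all $t$.

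The paper closes this gap with a genuinely parabolic tool you do not mention: a short-time $(c,f)$-mean curvature flow with $c=0$, i.e.\ $\partial_s F = (H_f)\,\nu$, starting from $S_{t_0}$. The flow speed $\phi=H_f$ then satisfies a linear parabolic equation $\partial_s\phi = \Delta_{\Sigma_s}\phi - D_{\Sigma_s}f\cdot D_{\Sigma_s}\phi - (|K|^2+\mathrm{Ric}_f(\nu,\nu))\phi$ on the evolving surfaces, and the \emph{parabolic} strong maximum principle (Lemma~\ref{lemma3.1}) forces $\phi<0$ everywhere for $s>0$ unless $\phi\equiv 0$ initially. This immediately produces a nearby compact spacelike Cauchy surface with $H_f<0$ pointwise, to which Theorem~\ref{theorem1.1} applies, contradicting future completeness. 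The spatial coupling you need comes from the Laplacian in the flow equation, not from Riccati.
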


\begin{corollary}\label{corollary1.4}
Let $M$ be a spacetime satisfying ${\rm Ric}_f(X,X) \ge 0$ for all timelike vectors
$X$, and suppose that $|f| \le k$. Suppose further that $M$ admits a constant f-mean curvature spacelike  Cauchy hypersurface $S$. If $(M,g)$ is timelike geodesically complete then $M$ splits along $S$, i.e., $(M,g)$ is isometric
to $((-\infty,\infty) \times S, -dt^2 \oplus h)$,
where $h$ is the induced metric on $S$,
and $f$ is independent of $t$.
\end{corollary}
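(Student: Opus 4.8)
The plan is to reduce the corollary to Theorem~\ref{theorem1.3} by applying it to both the future and the past of $S$, after first establishing that the constant value of $H_f(S)$ must vanish. Write $H_f(S)\equiv c$ for the assumed constant $f$-mean curvature, computed with respect to the future-pointing unit normal. Since $|f|\le k$ gives in particular $f\le k$, if $c<0$ then $H_f(S)<0$ everywhere on $S$, so Theorem~\ref{theorem1.1} forces $M$ to be future timelike geodesically incomplete, contradicting completeness. To rule out $c>0$ I would pass to the spacetime $\bar M$ obtained from $M$ by reversing the time orientation. The energy condition ${\rm Ric}_f(X,X)\ge 0$ and the bound $|f|\le k$ are insensitive to time orientation, while the $f$-mean curvature of $S$ with respect to the now past-pointing normal is $-c$ (both the ordinary mean curvature and $\langle\nabla f,\nu\rangle$ change sign with $\nu$). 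Hence if $c>0$ then $\bar M$ has $H_f(S)=-c<0$, and Theorem~\ref{theorem1.1} makes $\bar M$ future incomplete, i.e. $M$ past timelike geodesically incomplete, again a contradiction. Therefore $c=0$.

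Next I would apply Theorem~\ref{theorem1.3} twice. Since $H_f(S)=0\le 0$ and $M$ is future complete, the future splits: $(J^+(S),g)$ is isometric to $([0,\infty)\times S,-dt^2\oplus h)$ with $f$ independent of $t$. Applying the same theorem to $\bar M$—whose future is $J^-(S)$, which is future complete because $M$ is past complete, and whose slice $S$ again has vanishing $f$-mean curvature—yields that $(J^-(S),g)$ is isometric to $((-\infty,0]\times S,-dt^2\oplus h)$, with $f$ once more $t$-independent. In each half the curves $t\mapsto(t,x)$ are exactly the unit-speed normal geodesics issuing from $x\in S$, and $h$ is the metric induced on $S$, so the two product structures agree on the common slice $\{0\}\times S=S$.

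Finally I would glue the two half-models. Because $S$ is a Cauchy surface, through every point of $M$ there is an inextendible timelike curve meeting $S$, so $M=I^-(S)\cup S\cup I^+(S)=J^+(S)\cup J^-(S)$ with $J^+(S)\cap J^-(S)=S$; hence the two isometries combine into a single map from $((-\infty,\infty)\times S,-dt^2\oplus h)$ onto $(M,g)$. The normal geodesic through each $x\in S$, extended to all parameter values, carries the future parametrization for $t\ge 0$ and the past parametrization for $t\le 0$, so the global coordinate $t\in(-\infty,\infty)$ is well defined and the metric is the product $-dt^2\oplus h$ throughout. Since $f$ depends only on $x$ in each half and the two descriptions coincide on $S$, $f$ is independent of $t$ on all of $M$, which is the asserted splitting.

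I expect the main obstacle to be the gluing step. One must verify that the future and past normal-geodesic parametrizations fit together into a single smooth product across $S$: the induced metric $h$ is common to both halves because neither product is warped, and smoothness across $S$ follows from the fact that the normal geodesics cross $S$ transversally with matching tangents and the same induced data at $t=0$. One must also confirm that the Cauchy property indeed gives $M=J^+(S)\cup J^-(S)$. By contrast, pinning down $c=0$ through the time-reversed application of Theorem~\ref{theorem1.1} is comparatively routine, the only delicate point being that every hypothesis except the sign of $H_f(S)$ is time-orientation invariant.
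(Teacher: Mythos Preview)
Your proposal is correct and follows essentially the same route as the paper: use Theorem~\ref{theorem1.1} and its time-dual to force $H_f(S)=0$, then apply Theorem~\ref{theorem1.3} and its time-dual to split $J^+(S)$ and $J^-(S)$. The paper treats the gluing step as immediate, whereas you spell out the details; your concern about matching the two halves across $S$ is legitimate but is handled exactly as you indicate, since both product structures are built from the same normal exponential map off $S$ with the same induced data at $t=0$.
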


Corollary \ref{corollary1.4} follows immediately from Theorems \ref{theorem1.1} and \ref{theorem1.3}.  Indeed, one must have $H_f(S) = 0$, otherwise $M$ is incomplete by Theorem \ref{theorem1.1} and its time-dual.  The conclusion then follows from  Theorem~\ref{theorem1.3} and its time-dual.  This corollary answers a question raised by Case \cite[see Conjecture~7.2 thereof and the paragraph following it]{Case}.

\begin{theorem}\label{theorem1.5}
Let $M$ be a spacetime satisfying ${\rm Ric}_f(X,X) \ge -(n-1)$ for all timelike vectors $X$, and suppose that $\nabla f$ is future causal. Let $S$ be a smooth compact spacelike Cauchy surface for $M$ with $f$-mean curvature $H_f(S) \le -(n-1)$.
If the timelike geodesics orthogonal to $S$ are future complete,
$(J^+(S),g)$ is isometric to the warped product $([0,\infty) \times S, -dt^2 \oplus  e^{-2t} h)$, where $h$ is the induced metric on $S$, and $f$ is constant.
\end{theorem}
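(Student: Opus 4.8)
My plan is to run a Bakry--\'Emery Raychaudhuri comparison along the future-directed timelike geodesics issuing orthogonally from $S$, using the de Sitter--type steady state $H_f \equiv -(n-1)$ as the model, and then to read off the rigid warped-product structure from the equality case. This parallels the borderline analysis behind Theorem~\ref{theorem1.2}(ii), now pushed to extract geometric rigidity rather than mere incompleteness. First I would fix a future unit-speed geodesic $\gamma$ normal to $S$, write $N=\gamma'$, and let $H$ and $|\sigma|^2$ denote the mean curvature (expansion) and squared shear of the (irrotational) normal congruence, so that $H_f=H-N(f)$ is the $f$-mean curvature of the evolving slices. Differentiating and using the standard Raychaudhuri identity together with $\tfrac{d}{d\tau}N(f)={\rm Hess}\,f(N,N)$ gives
\[
H_f' \;=\; -\,{\rm Ric}(N,N) - |\sigma|^2 - \frac{H^2}{n-1} - {\rm Hess}\,f(N,N)
\;=\; -\,{\rm Ric}_f(N,N) - |\sigma|^2 - \frac{H^2}{n-1}\,.
\]
To close this in $H_f$ I would use the hypotheses: ${\rm Ric}_f(N,N)\ge -(n-1)$ controls the first term, while $\nabla f$ future causal forces $N(f)=\langle\nabla f,N\rangle\le 0$, whence $H\le H_f$ and, in the regime $H_f\le -(n-1)<0$, also $H^2\ge H_f^2$. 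This yields the Riccati differential inequality
\[
H_f' \;\le\; (n-1) - \frac{H_f^2}{n-1}\,,
\]
whose comparison ODE has the constant solution $-(n-1)$ and drives any datum below $-(n-1)$ to $-\infty$ in finite proper time.

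The heart of the argument is converting future completeness into equality everywhere. Since $S$ is a compact spacelike Cauchy surface, $J^+(S)$ is globally hyperbolic, the Lorentzian distance $\tau(\cdot)=d(S,\cdot)$ is finite and continuous, and each point of $J^+(S)$ is reached by a maximal normal geodesic with no interior focal point. Along such a maximal geodesic the Riccati inequality must hold without blow-up, so a strict inequality $H_f<-(n-1)$ at its foot point would force a focal point at a finite parameter and hence bound its length. Using completeness (the segment to $\gamma(T)$ has length $\ge T$) together with compactness of $S$ to take limits of maximal segments as $T\to\infty$, I would produce future-complete, focal-point-free maximal rays on which $H_f\equiv -(n-1)$, and then propagate this to all of $J^+(S)$. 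I expect this causal-theoretic step --- showing that completeness forces equality everywhere and that the normal exponential map is a diffeomorphism of $[0,\infty)\times S$ onto $J^+(S)$ (no focal points, no cut points, surjective) --- to be the \emph{main obstacle}, and I would carry it out exactly as in the proof of the analogous splitting result Theorem~\ref{theorem1.3}, possibly via a support-function/maximum-principle argument for $\tau$ rather than a bare limit.

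Once $H_f\equiv -(n-1)$, the chain of inequalities collapses to equalities: ${\rm Ric}_f(N,N)=-(n-1)$, the shear $|\sigma|^2=0$ so that every slice is totally umbilic, and $H=H_f=-(n-1)$, forcing $N(f)\equiv 0$. Umbilicity with $H=-(n-1)$ means the shape operator equals $-\,\mathrm{Id}$, so in the Gaussian normal coordinates $(t,x)$ on $[0,\infty)\times S$ supplied by the diffeomorphism the induced metrics $g_t$ satisfy $\partial_t g_t=-2\,g_t$, which integrates to $g_t=e^{-2t}h$ with $h$ the metric on $S$. This gives $(J^+(S),g)\cong([0,\infty)\times S,\,-dt^2\oplus e^{-2t}h)$ as claimed.

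Finally, to upgrade ``$f$ independent of $t$'' to ``$f$ constant,'' I would observe that $N(f)=\langle\nabla f,N\rangle=0$ at every point of $J^+(S)$ while $\nabla f$ is future causal. A causal vector orthogonal to a timelike vector lies in a positive-definite subspace and is therefore zero, so $\nabla f\equiv 0$ on the connected set $J^+(S)$ and $f$ is constant. This last observation is precisely what distinguishes the positive-$\Lambda$ rigidity here from the Lorentzian-product case of Theorem~\ref{theorem1.3}, where only $t$-independence of $f$ is obtained.
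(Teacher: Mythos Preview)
Your Riccati derivation and the equality-case analysis (umbilicity, $\partial_t g_t=-2g_t$, hence $g_t=e^{-2t}h$) are correct and match the paper. Your final paragraph, observing that $N(f)=0$ together with $\nabla f$ future causal forces $\nabla f=0$ and hence $f$ constant, is a clean argument that the paper's proof in fact leaves implicit (the paper only writes $\partial f/\partial t=0$).

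The substantive divergence is at your acknowledged ``main obstacle'': how to pass from $H_f\le -(n-1)$ to $H_f\equiv -(n-1)$. You propose limits of maximal segments and a possible support-function/maximum-principle argument for the distance $\tau$, and you defer to the proof of Theorem~\ref{theorem1.3}. But the paper's proof of Theorem~\ref{theorem1.3} does \emph{not} use either of those techniques; both Theorem~\ref{theorem1.3} and Theorem~\ref{theorem1.5} are proved via a \emph{parabolic deformation}: one runs a $(c,f)$-mean curvature flow (Lemma~\ref{lemma3.1}) with $c=-(n-1)$ on the slice $S_{t_0}$ where $H_f<-(n-1)$ holds somewhere, obtaining a nearby compact Cauchy surface with $H_f<-(n-1)$ \emph{strictly everywhere}, and then invokes the singularity theorem (Theorem~\ref{theorem1.2}(ii)) to contradict completeness. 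Your limit-of-maximizers idea would only produce a single rigid ray, and the ``propagate to all of $J^+(S)$'' step you mention is exactly where your sketch is incomplete; a focal point along one normal geodesic does not by itself contradict completeness of that geodesic. The mean-curvature-flow trick sidesteps this entirely by converting a pointwise strict inequality into a global one on a deformed Cauchy surface, so that the already-proved singularity theorem does the work. A Busemann/support-function route in the spirit of Lorentzian splitting theorems could presumably be made to work, but it is a genuinely different and longer argument than what the paper does.
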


Section 2.1 contains a discussion of the underlying Bakry-\'Emery modified Raychaudhuri equation. The necessary estimates for this equation are proved in section 2.2. The singularity theorems, Theorems \ref{theorem1.1} and \ref{theorem1.2}, are proved in section 2.3.

Section 3 contains the proofs of the rigidity theorems, Theorems \ref{theorem1.3} and \ref{theorem1.5}. The method of proof involves the use of a local mean curvature flow, modified by a shift of the fixed point in the case of Theorem \ref{theorem1.5}, to show that if the geometry is not sufficiently rigid then the flow will produce a hypersurface to which the singularity theorems apply. The mean curvature flow is briefly discussed in section 3.1. The proofs of the rigidity theorems are given in section 3.2.

While these results are general theorems of Lorentzian geometry, we have already observed that they have application in physics, and this is discussed in section 4. Since initial (big bang) singularities are of primary interest, we begin by giving Theorems \ref{theorem4.1} and \ref{theorem4.2}, which are simply time-reversed versions of the singularity theorems of the introductory section. These two theorems can be applied generally to quite arbitrary scalar-tensor gravitation theories, but we restrict subsequent attention to the Brans-Dicke family in $n=4$ spacetime dimensions with a possible potential function for the scalar field. Theorem \ref{theorem4.6} gives conditions under which the Jordan frame metric of a Brans-Dicke theory will have a big bang type singularity in the past, while \ref{theorem4.7} deals with the case of positive cosmological constant. We then compare these theorems with singularity theorems obtained by a conformal transformation (``Einstein frame'') argument.

Except in section 4, we have no restriction on the spacetime dimension $n\ge 2$.

\medskip
\noindent\emph{Acknowledgements.} The work of EW was supported by a Discovery Grant from the Natural Sciences and Engineering Research Council (NSERC) of Canada. The work of GJG was  supported by NSF grant DMS-1313724 and by a grant from the Simons Foundation (Grant No.  63943). The work of GJG was also supported by NSF grant 0932078 000, while in residence at MSRI, Berkeley, during Fall, 2013.
Both authors wish to express their gratitude to the Park City Math Institute 2013 Summer Research Program, where this work was begun.

\section{Riccati estimates}
\setcounter{equation}{0}

\subsection{The Raychaudhuri equation}

\noindent We will need certain simple estimates governing the behaviour of solutions of the Raychaudhuri equation, the scalar Riccati equation used to study the focusing behavior of timelike geodesic congruences issuing orthogonally from a spacelike hypersurface $\Sigma$. We recall the basic set-up. Let $\gamma$ belong to such a congruence ${\mathcal C}$ and let it be parametrized by proper time $t$, so the geodesics are ``unit speed''. Thus, $g(\gamma',\gamma')=-1$ where $\gamma'=\frac{d}{dt}$. At $\Sigma$ we have $\gamma'\vert_{\Sigma}=\nu$ where $\nu$
is the future directed  unit normal vector field for $\Sigma$. The congruence ${\mathcal C}$ is surface-forming, so for a curve $\gamma\in {\mathcal C}$, we obtain a foliated neighborhood ${\mathcal N}$ in spacetime near $\gamma:[0,T)\to M$ by moving a parameter distance $t<T$ along the congruence from $\Sigma$, provided that $\gamma$ has no focal point to $\Sigma$ in ${\mathcal N}$. These leaves are also spacelike hypersurfaces. The \emph{extrinsic curvature} or \emph{second fundamental form} of the hypersurface $\Sigma_t$ can be defined as
\begin{equation}
\begin{split}
\label{eq2.1} K(t)(X,Y) = -\nu_t\cdot \left ( \nabla_X Y\right )\ ,\ X,Y\in T_{\gamma(t)}\Sigma_t \ ,
\end{split}
\end{equation}
where $\nu_t$ is the future directed  unit normal for $\Sigma_t$. The \emph{expansion scalar} or \emph{mean curvature} of the congruence is
\begin{equation}
\label{eq2.2} H(t):={\rm tr}_h K(t) \ ,
\end{equation}
where $h:=g+\nu\otimes\nu$ is the induced metric on the leaf. We often suppress the argument $t$. Then the Raychaudhuri equation is
\begin{equation}
\label{eq2.3} \frac{\partial H}{\partial t}=-{\rm Ric}(\nu,\nu)-\vert K \vert^2=-{\rm Ric}(\nu,\nu)-|\sigma|^2-\frac{H^2}{(n-1)}\ ,
\end{equation}
where $|K|^2:=h^{ij}h^{kl}K_{ik}K_{jl}$, $\sigma_{ij}:=K_{ij}-\frac{H}{(n-1)}h_{ij}$ is the \emph{shear} (i.e., the tracefree part of $K_{ij}$), and $n$ is the spacetime dimension.

We deal with the \emph{Bakry-\'Emery modified versions}, also simply called the \emph{modified versions} or \emph{$f$-versions} of curvature quantities. The \emph{$f$-mean curvature} is defined along our unit speed timelike geodesic congruence to which $\gamma$ belongs by
\begin{equation}
\label{eq2.4} 
H_f:=
H - \nabla_{\nu} f \equiv H -  f' \ ,
\end{equation}
where we abbreviate $f\circ\gamma$ by simply writing $f$, so that $\frac{df}{dt}:=f'(t):=(f\circ\gamma)'(t)$. The Raychaudhuri equation (\ref{eq2.3}) then yields the inequality
\begin{equation}
\label{eq2.5}
\begin{split}
{H_f}'&\le -{\rm Ric}_f(\gamma',\gamma')-\frac{H^2}{n-1}\\
&\le -{\rm Ric}_f(\gamma',\gamma')-\frac{H_f^2}{n-1}-\frac{2 H_f f'}{n-1}\ .
\end{split}
\end{equation}
It is convenient to normalize $H_f$ using
\begin{equation}
\label{eq2.6} x:=H_f/(n-1)\ .
\end{equation}
Then $x$ is the \emph{normalized $f$-mean curvature} of the leaves of the foliation by $t=const$ hypersurfaces.  The last inequality is then
\begin{equation}
\label{eq2.7} x'\le -\frac{1}{(n-1)}{\rm Ric}_f(\gamma',\gamma')-x^2 -\frac{2xf'}{(n-1)}\ .
\end{equation}

\subsection{The focusing estimates}

\noindent Here we obtain estimates for functions $x$ that obey (\ref{eq2.7}). To obtain the first estimate, note that when ${\rm Ric}_f(\gamma',\gamma')\ge 0$, equation (\ref{eq2.7}) becomes
\begin{equation}
\label{eq2.8} x'\le -x^2 -\frac{2xf'}{(n-1)}\ .
\end{equation}

As discussed above, we are concerned with the congruence ${\mathcal C}$ of future timelike, unit speed geodesics issuing orthogonally from a smooth spacelike hypersurface $\Sigma$. For $\gamma\in {\mathcal C}$ with $p=\gamma(0)\in \Sigma$, we will write $x_p(t):=x\circ\gamma(t)$, so $x_p(t)$ is the normalized $f$-mean curvature of the leaf $\Sigma_t$ at a point reached by traversing $\gamma$ for a proper time $t$ starting from $\gamma(0)=p\in\Sigma$. We similarly write $f_p:=f\circ\gamma$. Then (\ref{eq2.8}) becomes an ordinary differential inequality for $x_p$, and we have the following result:

\begin{lemma}\label{lemma2.1}
Suppose that
\begin{enumerate}
\item[(i)] $x_p$ obeys the inequality (\ref{eq2.8}),
\item[(ii)] there is a $\delta_p>0$ such that $x_p(0)\le -\delta_p$, and
\item[(iii)] $f(q)\le k$ for some $k\in{\mathbb R}$ and all $q\in M$.
\end{enumerate}
Then there exists a $t_p>0$ such that $x_p(t)\to -\infty$ at or before $t_p$, and
$t_p$ depends only on $n$, $k$, $\delta_p$, and $f_p(0)$.
\end{lemma}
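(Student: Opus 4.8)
\noindent\emph{Proof proposal.} The plan is to eliminate the undifferentiated first-order term $\tfrac{2xf'}{n-1}$ in (\ref{eq2.8}) by an integrating factor, thereby reducing the inequality to a pure Riccati form $y'\le -c\,y^2$ with a constant $c>0$ supplied by the hypothesis $f\le k$. A standard reciprocal substitution then produces the blow-up together with an explicit bound on the blow-up time. The point of this maneuver is that the sign of $f'$ is not controlled, so it cannot be estimated term-by-term; instead the whole $f'$-dependence should be packaged into the integrating factor, after which only the upper bound on $f$ is needed.

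First I would set $\mu(t):=e^{2f_p(t)/(n-1)}$, so that $\mu'=\tfrac{2f_p'}{n-1}\mu$, and compute $(\mu x_p)' = \mu\bigl(x_p' + \tfrac{2f_p'}{n-1}x_p\bigr)\le -\mu\, x_p^2$, the inequality being exactly (\ref{eq2.8}). Writing $y:=\mu x_p$ and using $\mu\, x_p^2 = y^2/\mu = e^{-2f_p/(n-1)}y^2$ gives $y'\le -e^{-2f_p/(n-1)}y^2$. Because $f_p\le k$ by hypothesis (iii), we have $e^{-2f_p/(n-1)}\ge e^{-2k/(n-1)}=:c>0$, and since $y^2\ge 0$ this yields the clean inequality $y'\le -c\,y^2$.

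Next I would record that $y(0)=\mu(0)x_p(0)\le -e^{2f_p(0)/(n-1)}\delta_p=:-\eta<0$ by (ii), and that $y'\le -c\,y^2\le 0$ forces $y$ to be non-increasing, hence $y(t)\le -\eta<0$ throughout its interval of existence. This keeps $y$ bounded away from $0$, so that $w:=-1/y$ is well-defined and positive there, with $w'=y'/y^2\le -c$ and $w(0)=-1/y(0)\le 1/\eta$. Integrating the linear bound gives $w(t)\le \tfrac1\eta-ct$, which reaches $0$ no later than $t_p:=\tfrac{1}{c\eta}=\delta_p^{-1}\,e^{2(k-f_p(0))/(n-1)}$. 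Since $w$ must remain strictly positive while $y$ is finite, the solution $y$, and therefore $x_p$, must diverge to $-\infty$ at or before $t_p$. The closed form for $t_p$ manifestly depends only on $n$, $k$, $\delta_p$, and $f_p(0)$, as claimed.

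The only real subtlety, and the step I would be most careful about, is the sign bookkeeping that legitimizes the reciprocal substitution: one must verify that $y$ stays strictly negative, so that $w=-1/y$ neither blows up nor changes sign prematurely, and that the uncontrolled sign of $f'$ does not interfere. Both points are dispatched above, the former by the monotonicity $y'\le 0$ and the latter by absorbing all of the $f'$-dependence into $\mu$ before invoking $f\le k$. It is worth emphasizing that this is exactly where hypothesis (iii) enters: without an upper bound on $f$ the factor $e^{-2f_p/(n-1)}$ could degenerate, the constant $c$ would fail to exist, and the Riccati comparison would break down—consistent with the necessity of $f\le k$ exhibited by the example in the footnote to Theorem \ref{theorem1.1}.
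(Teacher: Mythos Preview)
Your proof is correct and is essentially the same argument as the paper's: the paper divides (\ref{eq2.8}) by $-x_p^2$ and recognizes the result as $\bigl(e^{-2f_p/(n-1)}/x_p\bigr)'\ge e^{-2f_p/(n-1)}$, which is exactly your computation after noting $e^{-2f_p/(n-1)}/x_p = 1/y = -w$; both routes yield the identical bound $t_p=\delta_p^{-1}e^{2(k-f_p(0))/(n-1)}$. Your explicit verification that $y$ stays strictly negative (so the reciprocal is legitimate) is a point the paper handles more tersely via its ``first zero $T_0$'' device, but the content is the same.
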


\begin{proof}
The proof is a straightforward modification of the proof of \cite[Lemma 3.1]{RW}. (One could also modify the proof of \cite[Proposition 3.3]{Case}.) Fix $p\in\Sigma$ and let $[0,T)$ be the largest such interval on which $x_p(t)$ is defined (possibly $T=\infty$). If $x_p$ has a zero, let $T_0\in (0,T)$ be the first zero; otherwise, $T_0=T$. On $[0,T_0)$, we can divide (\ref{eq2.8}) by $-x_p^2$ and obtain
\begin{equation}
\label{eq2.9}\left ( \frac{e^{-\frac{2f_p}{(n-1)}}}{x_p}\right )'\ge e^{-\frac{2f_p}{(n-1)}}\ .
\end{equation}
Integrating on $[0,t]$, $t<T_0$, we obtain
\begin{equation}
\label{eq2.10}\frac{e^{-\frac{2f_p(t)}{(n-1)}}}{x_p(t)}
-\frac{e^{-\frac{2f_p(0)}{(n-1)}}}{x_p(0)}\ge
\int\limits_0^t e^{-\frac{2f_p(s)}{(n-1)}}ds\ge te^{-\frac{2k}{(n-1)}}\ .
\end{equation}
Solving for $x_p(t)$ and using that $1/\delta_p>-1/x_p(0)$, this yields
\begin{equation}
\label{eq2.11} x_p(t)\le -\frac{e^{\frac{2(k-f_p(t))}{(n-1)}}}{
\frac{1}{\delta_p}e^{\frac{2(k-f_p(0))}{(n-2)}}-t}\ .
\end{equation}
We first observe that the denominator of the quotient in the right-hand side is positive at $t=0$ and decreases linearly, so the denominator cannot diverge to $\infty$. Since the numerator cannot have a zero, then $T_0=T$. Moreover, the denominator is positive for as long as $x_p(t)$ is defined, and so we can write
\begin{equation}
\label{eq2.12} x_p(t)\le -\frac{e^{\frac{2(k-f_p(t))}{(n-1)}}}{\left ( \frac{1}{ \delta_p}e^{\frac{2(k-f_p(0))}{(n-1)}}-t\right )}
\le -\frac{1}{\left ( \frac{1}{\delta_p}e^{\frac{2(k-f_p(0))}{(n-1)}}-t\right )}
\le -\frac{1}{\left ( t_p-t\right )}\ ,
\end{equation}
where
\begin{equation}
\label{eq2.13}t_p:= \frac{1}{\delta_p}e^{\frac{2(k-f_p(0))}{(n-1)}}\ .
\end{equation}
Clearly, $T\le t_p$.
\end{proof}

We now consider the case where ${\rm Ric}_f(X,X)\ge -(n-1)$ for all future timelike unit vectors $X$. In this case, equation (\ref{eq2.7}) becomes
\begin{equation}
\label{eq2.14} x'\le 1-x^2-\frac{2xf'}{(n-1)}\ .
\end{equation}

\begin{lemma}\label{lemma2.2}
As before, let $\gamma\in {\mathcal C}$ belong to the unit speed timelike geodesic congruence issuing from $\Sigma$ orthogonally, with $\gamma(0)=p$. Let $x_p:=x\circ\gamma$ and let $f_p:=f\circ\gamma$. Suppose that
\begin{enumerate}
\item[(i)] $x_p$ obeys the inequality (\ref{eq2.14}),
\item[(ii)] $f_p\le k_p$ for some $k_p\in{\mathbb R}$, and
\item[(iii)] $x_p(0) \le -(1+\delta_p )e^{\frac{2(k_p-f_p(0))}{(n-1)}}$ for some $\delta_p>0$.
\end{enumerate}
Then there exists a $t_p=t_p(\delta_p)>0$ such that $x_p(t)\to -\infty$ at or before $t_p$.
\end{lemma}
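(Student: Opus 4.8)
The plan is to introduce a change of dependent variable that simultaneously normalizes the initial condition (iii) and eliminates the weight term in \eqref{eq2.14}, reducing the differential inequality to an autonomous one for which finite-time blow-up is elementary. Writing $\beta:=\frac{2f_p'}{n-1}$, so that \eqref{eq2.14} reads $x_p'\le 1-x_p^2-\beta x_p$, and motivated by the exponential factor appearing in (iii), I would set
\[
z:=e^{\lambda}x_p\,,\qquad \lambda:=\frac{2(f_p-k_p)}{n-1}\,.
\]
Since $f_p\le k_p$ by hypothesis (ii), we have $\lambda\le 0$ throughout, hence $e^{\lambda}\le 1\le e^{-\lambda}$; this sign information is what does the work. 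Moreover $\lambda(0)=\frac{2(f_p(0)-k_p)}{n-1}$, so hypothesis (iii) translates into the clean normalized bound $z(0)=e^{\lambda(0)}x_p(0)\le -(1+\delta_p)$.

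Next I would compute $z'$ and feed in \eqref{eq2.14}. Because $\lambda'=\beta$ and $x_p=e^{-\lambda}z$, differentiating gives $z'=\beta z+e^{\lambda}x_p'$, and substituting \eqref{eq2.14} and using $e^{\lambda}x_p=z$, the two $\beta z$ contributions cancel, leaving
\[
z'\le e^{\lambda}-e^{\lambda}x_p^2=e^{\lambda}-e^{-\lambda}z^2\,,
\]
where the identity $e^{\lambda}x_p^2=e^{-\lambda}z^2$ is just $x_p=e^{-\lambda}z$ squared. Using $e^{\lambda}\le 1$ and $e^{-\lambda}\ge 1$ (both from $\lambda\le 0$) I then obtain the autonomous differential inequality $z'\le 1-z^2$ with $z(0)\le -(1+\delta_p)$.

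Finally I would run the standard finite-time blow-up argument for this inequality. A barrier argument shows $z$ remains $\le -(1+\delta_p)$ for as long as it exists: at any point where $z=-(1+\delta_p)$ one has $z'\le 1-(1+\delta_p)^2<0$, so $z$ cannot cross this level from below. On this region $1-z^2\le -\eta z^2$ with $\eta:=1-(1+\delta_p)^{-2}\in(0,1)$, so dividing by $-z^2$ (exactly as in the proof of Lemma~\ref{lemma2.1}) gives $(1/z)'\ge \eta$, whence $1/z(t)\ge 1/z(0)+\eta t$. Since $z(t)<0$, the chain $1/z(0)+\eta t\le 1/z(t)<0$ can persist only for $t<\tfrac{1}{\eta(1+\delta_p)}=:t_p$, which depends only on $\delta_p$; hence $z(t)\to-\infty$ at or before $t_p$. (Equivalently one may compare with the solution $\coth(t-c)$ of $y'=1-y^2$.) As $x_p=e^{-\lambda}z$ with $e^{-\lambda}\ge 1$ and $z<0$, we have $x_p\le z$, so $x_p\to-\infty$ no later than $z$ does, completing the proof.

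I expect the only real obstacle to be finding the substitution. Once $z=e^{2(f_p-k_p)/(n-1)}x_p$ is in hand, the cancellation of the weight term and the reduction to $z'\le 1-z^2$ are forced by the sign $\lambda\le 0$, the normalization of the initial datum is automatic, and the finite-time blow-up is routine; the fact that $t_p$ ends up depending on $\delta_p$ alone is precisely the payoff of choosing the $f$-dependent exponential weight in the substitution.
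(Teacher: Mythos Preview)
Your proof is correct and follows essentially the same route as the paper: the substitution $z=e^{2(f_p-k_p)/(n-1)}x_p$ is exactly the paper's $y_p$, and the reduction to $z'\le 1-z^2$ via $f_p\le k_p$ is identical. The only cosmetic difference is in the final blow-up step---the paper integrates $\frac{y_p'}{y_p^2-1}\le -1$ directly to obtain the sharp $\coth$ comparison and $t_p=\operatorname{arctanh}\frac{1}{1+\delta_p}$, while you use the cruder bound $1-z^2\le -\eta z^2$ to get a larger (but still $\delta_p$-only) $t_p$; you even note the $\coth$ alternative yourself.
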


\begin{proof}
Along $\gamma$ equation (\ref{eq2.14}) can be written as
\begin{equation}
\label{eq2.15} \left ( e^{\frac{2(f_p(t)-k_p)}{(n-1)}} x_p(t)\right )'
\le e^{\frac{2(f_p(t)-k_p)}{(n-1)}}\left ( 1-x_p^2(t) \right )\ ,
\end{equation}
or
\begin{eqnarray}
\label{eq2.16} y_p'(t)&\le& e^{\frac{2(f_p(t)-k_p)}{(n-1)}}-e^{-\frac{2(f_p-k_p)}{(n-1)}}y_p^2(t)\ ,\\
\label{eq2.17} y_p(t)&:=& e^{\frac{2(f_p(t)-k_p)}{(n-1)}}x_p(t)\ .
\end{eqnarray}
Since $f_p(t)\le k_p$, then (\ref{eq2.16}) yields
\begin{equation}
\label{eq2.18} y_p'\le 1-y_p^2\ .
\end{equation}

Similarly to the previous proof, let $[0,T)$ be the largest interval on which $y_p$ is defined (possibly $T=\infty$) and let $T_0$ be the first point at which $y_p=-1$; if there is no such point, then let $T_0=T$. By assumption (iii) we have $y_p(0)\le -(1+\delta_p)<-1$, so $1-y_p^2<0$ on $[0,T_0)$. Then on $[0,T_0)$ equation (\ref{eq2.18}) yields
\begin{equation}
\label{eq2.19} \frac{y_p'}{y_p^2-1}\le -1\ ,
\end{equation}
so
\begin{eqnarray}
\label{eq2.20} y_p(t)&\le & -{\rm\, coth\,}(t_p-t) \ , \\
\label{eq2.21} t_p&:=&  {\rm arctanh\,}\frac{1}{1+\delta_p}\ .
\end{eqnarray}
Thus $T_0=T$ and (\ref{eq2.17}) and (\ref{eq2.20}) imply that
\begin{equation}
\label{eq2.22} x_p(t)\le -e^{\frac{2(k_p-f_p(t))}{(n-1)}}{\rm coth\,}(t_p-t) \ .
\end{equation}
Therefore $x_p(t)\to -\infty$ as $t\nearrow T$ for some $0<T\le t_p$.
\end{proof}

The time $t_p=t_p(\delta_p)$ depends on $k_p$ only indirectly, in that $k_p$ determines $\delta_p$ in condition (iii) of the theorem. When $k_p$ is realized at $p$, condition (iii) simplifies, as occurs in the following result:

\begin{corollary}\label{corollary2.3} Let $\gamma$ be as in Lemma \ref{lemma2.2}. Say that
\begin{enumerate}
\item[(i)] $x_p$ obeys the inequality (\ref{eq2.14}),
\item[(ii)] $\nabla f$ is future causal, and
\item[(iii)] $x_p(0)\le -(1+\delta_p)$ for some $\delta_p>0$.
\end{enumerate}
Then there is a $t_p=t_p(\delta_p)>0$ such that $x_p(t)\to -\infty$ at or before $t_p$.
\end{corollary}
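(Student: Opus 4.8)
The plan is to reduce Corollary~\ref{corollary2.3} directly to Lemma~\ref{lemma2.2} by choosing the upper bound adaptively as $k_p := f_p(0)$. The only substantive thing to check is that this choice is legitimate, i.e. that $f_p(t) \le f_p(0)$ for all $t \ge 0$ whenever $\nabla f$ is future causal. Once this monotonicity is established, hypothesis (iii) of the corollary becomes \emph{exactly} hypothesis (iii) of Lemma~\ref{lemma2.2}, because the exponential prefactor $e^{\frac{2(k_p - f_p(0))}{(n-1)}}$ collapses to $e^0 = 1$.

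First I would differentiate $f$ along the geodesic. Since $f_p := f\circ\gamma$ and $\gamma$ is a unit-speed future timelike geodesic, $f_p'(t) = \nabla_{\gamma'} f = g(\nabla f, \gamma')$. The key observation is an elementary fact of Lorentzian causality: in the signature $(-,+,\dots,+)$ used here (recall $g(\gamma',\gamma') = -1$), the $g$-inner product of two future-directed causal vectors is nonpositive. Here $\gamma'$ is future timelike and, by hypothesis (ii), $\nabla f$ is future causal, so $g(\nabla f, \gamma') \le 0$. Hence $f_p' \le 0$, so $f_p$ is nonincreasing and $f_p(t) \le f_p(0)$ for all $t \ge 0$.

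Setting $k_p := f_p(0)$, the monotonicity just established shows that hypothesis (ii) of Lemma~\ref{lemma2.2} holds with this value of $k_p$. Moreover $k_p - f_p(0) = 0$, so the bound in hypothesis (iii) of Lemma~\ref{lemma2.2} reduces to $-(1+\delta_p)e^0 = -(1+\delta_p)$, which is precisely hypothesis (iii) of the corollary; and hypothesis (i) (that $x_p$ obeys (\ref{eq2.14})) is common to both statements. Thus all three hypotheses of Lemma~\ref{lemma2.2} are met, and its conclusion---the existence of $t_p = t_p(\delta_p) = {\rm arctanh\,}\frac{1}{1+\delta_p}$ with $x_p(t) \to -\infty$ at or before $t_p$---follows at once.

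I do not anticipate a genuine obstacle: the entire content is the monotonicity of $f$ along future timelike geodesics, which rests only on the sign of the inner product of future-directed causal vectors. The one point to state with care is the sign convention, together with the harmless borderline case in which $\nabla f$ vanishes at some points of $\gamma$; this still yields $f_p' \le 0$ rather than strict decrease, and non-strictness is irrelevant since we need only $f_p \le f_p(0)$.
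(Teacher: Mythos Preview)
Your proposal is correct and follows essentially the same route as the paper: set $k_p := f_p(0)$, observe that $\nabla f$ future causal forces $f_p' = g(\nabla f,\gamma')\le 0$ so that $f_p(t)\le f_p(0)=k_p$, and then note that hypothesis (iii) of Lemma~\ref{lemma2.2} collapses to hypothesis (iii) of the corollary. The paper's proof is the same argument, stated a bit more tersely.
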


\begin{proof}
Condition (i) is the same as condition (i) of Lemma \ref{lemma2.2}. Since $\nabla f$ is future causal and $\gamma\in {\mathcal C}$ is future timelike, then $\nabla_{\gamma'}f\le 0$. Therefore, $f_p$ is (at least weakly) decreasing along $\gamma$, so $f_p(t) \le f_p(0)=: k_p$ in the terminology of Lemma \ref{lemma2.2}. Then condition (ii) of Lemma \ref{lemma2.2} holds for $p$, and condition (iii) of that lemma reduces to condition (iii) of this corollary.
\end{proof}

\subsection{Proofs of singularity theorems}
We are now in a position to prove Theorems \ref{theorem1.1} and \ref{theorem1.2} from the Introduction. Each theorem rests on the following standard result.

\begin{lemma}\label{lemma2.4}
Suppose that $S$ is a spacelike Cauchy surface and $\sigma$ is a future complete timelike geodesic. Then there is an arbitrarily long future timelike geodesic $\gamma$ leaving $S$ orthogonally and having no focal point to $S$.
\end{lemma}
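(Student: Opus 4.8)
The plan is to use the fact that the existence of a spacelike Cauchy surface $S$ renders $(M,g)$ globally hyperbolic, and then to obtain the required geodesics $\gamma$ as \emph{maximal} (Lorentzian length-maximizing) geodesics running from $S$ to a sequence of points that escape to future infinity along $\sigma$.

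First I would fix a sequence of target points on $\sigma$. Since $S$ is a Cauchy surface, every inextendible timelike curve meets it exactly once; applying this to the maximal extension of $\sigma$ and reparametrizing, we may arrange that $\sigma(t) \in I^+(S)$ for all large $t$. Set $q_m := \sigma(m)$ for integer $m$. The portion of $\sigma$ lying to the future of $S$ is a unit-speed timelike curve from $S$ to $q_m$ whose length grows linearly in $m$, so by future completeness of $\sigma$ the Lorentzian distance obeys $d(S, q_m) \to \infty$ as $m \to \infty$.

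Next, for each $m$ I would produce a maximal geodesic $\gamma_m$ from $S$ to $q_m$. In a globally hyperbolic spacetime the standard limit-curve / Avez--Seifert machinery---compactness of the space of causal curves joining $S$ to a point $q \in I^+(S)$---guarantees that $d(S, q_m)$ is finite and is attained by a future-directed timelike geodesic $\gamma_m$ from $S$ to $q_m$ of length $d(S, q_m) \ge m$. These geodesics are therefore arbitrarily long, which supplies the ``arbitrarily long'' clause.

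Finally, the two qualitative properties are consequences of first- and second-variation arguments that are standard for length-maximizing geodesics meeting a hypersurface. Orthogonality of $\gamma_m$ to $S$ at its starting endpoint is forced by the first variation of arclength among curves emanating from $S$: a tangential component of $\gamma_m'$ along $S$ would produce a nearby strictly longer timelike competitor. Absence of a focal point to $S$ in the interior follows from the second variation, since a focal point strictly before $q_m$ would permit a variation yielding a strictly longer neighboring timelike curve, contradicting maximality. Taking $\gamma := \gamma_m$ with $m$ large then gives the claim. The main obstacle is purely one of correctly invoking the relevant global-hyperbolicity results: one must ensure the compactness and maximization are applied to causal curves issuing from the \emph{hypersurface} $S$ (so that the obstruction is a \emph{focal} point, not a conjugate point between two fixed points), and that the maximizer is genuinely timelike. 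These are textbook facts (see, e.g., \cite{HE}), which is presumably why the authors regard the lemma as standard.
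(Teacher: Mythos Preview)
Your proof is correct and follows essentially the same approach as the paper: pick points $q$ far along the future-complete geodesic $\sigma$, use global hyperbolicity to obtain a maximal timelike geodesic from $S$ to $q$, and note that maximality forces orthogonality at $S$ and the absence of focal points. The paper's argument is more terse but identical in substance; your version simply spells out the Avez--Seifert existence step and the first/second-variation reasoning more explicitly.
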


\begin{proof}
Let $q$ along $\sigma$ lie at Lorentzian distance $d(S,q)=t_q$ from $S$. Since $\sigma$ is future complete, we can choose $q$ so that $t_q$ is arbitrarily large. Let $\gamma : [0, t_q] \to M$ be a unit speed maximal timelike geodesic segment from $S$ to $q$. Then $\gamma$ must leave $S$ orthogonally. The length of (that is, the proper time along) $\gamma$ is $L(\gamma) = d(S,q)=t_q$, where $d(\cdot , \cdot)$ denotes the Lorentzian distance. Since the length of $\gamma$ realizes the distance from $S$ to $q$, it cannot have a focal point to $S$ before $q$.
\end{proof}

\begin{proof}[Proof of Theorem \ref{theorem1.1}] Since ${\rm Ric}_f(X,X)\ge 0$ for all timelike vectors $X$, the inequality (\ref{eq2.8}) governs $x$. By compactness of $S$, the restriction of $f$ to $S$ has a lower bound $N$, so $f_p(0)\ge N$ in Lemma \ref{lemma2.1}. Also by compactness, and since $H_f<0$ with $S$ and $f$ smooth, there exists $\delta > 0$ such that $H_f(S) \le -(n-1)\delta$ and so $x(0)\le -\delta$; i.e., $x_p(0)\le -\delta$ for all $p\in S$, so we can take $\delta_p=\delta$ independent of $p\in S$ in Lemma \ref{lemma2.1}. By assumption, $f\le k$ everywhere, so all conditions for Lemma \ref{lemma2.1} are satisfied, with $S \equiv \Sigma$. Furthermore, they are satisfied for every $p\in S$ with the same $\delta_p=\delta$. Then Lemma \ref{lemma2.1} implies that every future timelike geodesic leaving $S$ orthogonally must have a focal point for $t\le t_p\le t_0$, where $t_0:=\sup_{p\in S} t_p(0)$. But if there were a future complete timelike geodesic, this would contradict Lemma \ref{lemma2.4}.
\end{proof}

\begin{proof}[Proof of Theorem \ref{theorem1.2} part (i).]
Since ${\rm Ric}_f(X,X)\ge -(n-1)$ for all timelike vectors $X$, the inequality (\ref{eq2.14}) now governs $x$. From compactness of $S$, we have $\inf_{p\in S} f(p) = N\in {\mathbb R}$.

From equation (\ref{eq1.2}) we have for $p\in\Sigma$ that
\begin{equation}
\label{eq2.23}
\begin{split}
&\,\, x_p(0)<-e^{\frac{2(k-N)}{(n-1)}}\\
\Rightarrow &\,\, x_p(0)<-e^{\frac{2(k_p-f_p(0))}{(n-1)}}\\
\Rightarrow &\,\, x_p(0)\le -(1+\delta) e^{\frac{2(k_p-f_p(0))}{(n-1)}}\\
\end{split}
\end{equation}
for some $\delta>0$, where in the last step we invoke the compactness of $S$. Then by Lemma \ref{lemma2.2} we have that $x_p(t)\to -\infty$ at or before $t=t_p(\delta) = {\rm arctanh\,}\frac{1}{1+\delta} =:t_0(\delta)$. That is, $t_p$ is uniformly bounded above by $t_0=t_0(\delta)$. Again, if there were a future complete timelike geodesic, this would contradict Lemma \ref{lemma2.4}.
\end{proof}

\begin{proof}[Proof of Theorem \ref{theorem1.2} part (ii).]
Again the inequality (\ref{eq2.14}) governs $x$. Since $H_f(S)<-(n-1)$ with $S$ compact and $f$ and $S$ smooth, then $H_f(S)\le -(n-1)\delta$ for some $\delta>0$, and so $x_p(0)\le -(1+\delta)$ uniformly for any $p\in S$. Then the conditions of Corollary \ref{corollary2.3} are fulfilled with $S\equiv \Sigma$, and so there is a uniform bound $t_0=t_0(\delta)>0$ such that, for any $p\in S$, $x_p(t)\to -\infty$ for some $t \le t_0(\delta)$. And again, if there were a future complete timelike geodesic, this would contradict Lemma \ref{lemma2.4}.
\end{proof}

Finally, we note that a weakening of the causality assumptions used in the above theorems will not necessarily eliminate the problem of geodesic incompleteness, or at least not entirely. The following result dispenses with the assumption of global hyperbolicity, but shows that there will still exist future incomplete timelike geodesics, though in this case we can no longer establish that all timelike geodesics are future incomplete (cf \cite[Theorem 4, p 272]{HE}).

\begin{theorem}\label{theorem2.5}
Let $M$ be a spacetime satisfying ${\rm Ric}_f(X,X) \ge 0$ for all timelike vectors
$X$, and suppose that $f \le k$. Let $S$ be a smooth compact acausal spacelike hypersurface with strictly negative $f$-mean curvature, $H_f(S) < 0$. Then there exists an inextendible future incomplete timelike geodesic in $M$.
\end{theorem}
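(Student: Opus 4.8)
The plan is to drop global hyperbolicity but retain the local focusing machinery of Lemma \ref{lemma2.1}, and then argue by contradiction: if no future incomplete timelike geodesic existed, every future-directed timelike geodesic would be complete, and in particular the normal geodesic congruence issuing from $S$ would exist for all proper time. The key observation is that the hypotheses ${\rm Ric}_f(X,X)\ge 0$, $f\le k$, and $H_f(S)<0$ are exactly the pointwise hypotheses needed to run Lemma \ref{lemma2.1} along each normal geodesic. As in the proof of Theorem \ref{theorem1.1}, compactness of $S$ gives a lower bound $N\le f|_S$ and a uniform $\delta>0$ with $x_p(0)\le-\delta$ for all $p\in S$, so Lemma \ref{lemma2.1} yields a uniform focal time bound $t_0=\sup_{p\in S}t_p<\infty$: every normal geodesic $\gamma_p$ acquires a focal point to $S$ at or before parameter value $t_0$.

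Next I would convert ``focal point'' into ``geodesic incompleteness'' \emph{without} a Cauchy surface. The standard maximization argument in Lemma \ref{lemma2.4} used global hyperbolicity to produce a maximal geodesic realizing the distance $d(S,q)$; here that is unavailable. Instead, since $S$ is acausal I would work in a globally hyperbolic neighborhood, or more directly invoke the focal comparison result in the form used for the Hawking-type theorem (cf.\ \cite[Theorem 4, p 272]{HE}): a future-directed timelike geodesic normal to $S$ cannot maximize Lorentzian distance to $S$ past its first focal point. The contrapositive gives the mechanism—beyond $t_0$ the normal geodesics fail to be maximizing—but to extract a single incomplete geodesic one argues as in the classical Hawking singularity theorem: consider the future Cauchy development $D^+(S)$, which \emph{is} globally hyperbolic, and show that the bound $t_0$ on focal times forces $D^+(S)$ to have finite ``temporal extent,'' so its future Cauchy horizon $H^+(S)$ is nonempty or some timelike geodesic leaves $D^+(S)$ in bounded proper time.

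The cleanest route is the standard one: assume for contradiction that \emph{every} inextendible future timelike geodesic is future complete. Take a future-inextendible timelike curve; its intersection behavior with $D^+(S)$ and $H^+(S)$, together with the focal-time bound $t_0$ and the characterization of $H^+(S)$ as generated by past-inextendible null geodesics, is incompatible with completeness exactly as in \cite[Theorem 4, p 272]{HE}. More precisely, one shows that a future-directed timelike geodesic $\gamma$ realizing the supremum of $d(S,\cdot)$ on $\overline{D^+(S)}$ (which is finite, bounded by $t_0$, by the focal estimate) cannot be extended as a maximizing geodesic; its inextendible extension must then be future \emph{incomplete}, since otherwise it would re-enter regions forcing a focal point and contradict maximality. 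This produces the single inextendible future incomplete timelike geodesic claimed.

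The main obstacle is precisely this last step: without global hyperbolicity the convenient equivalence ``uniform focal bound $\Rightarrow$ all geodesics incomplete'' fails, and one must settle for existence of \emph{one} incomplete geodesic by the Cauchy-development argument. The delicate point is verifying that the focal bound $t_0$, which controls the normal congruence from $S$, genuinely limits the proper-time extent of $D^+(S)$ and that the maximizing geodesic realizing this extent is timelike and normal to $S$ (so the estimate applies), rather than being pushed onto the horizon. I expect this to require carefully invoking the upper semicontinuity of the Lorentzian distance and the limit-curve construction on the globally hyperbolic piece $D^+(S)$, in close parallel to the Hawking theorem rather than any genuinely new analysis.
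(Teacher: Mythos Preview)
Your overall strategy matches the paper's: split on whether $H^+(S)=\emptyset$; if so, run Theorem~\ref{theorem1.1} verbatim; if not, work inside the globally hyperbolic region $D^+(S)$ and use the focal bound from Lemma~\ref{lemma2.1}. The paper, however, does not attempt your ``supremum of $d(S,\cdot)$'' construction. Instead it invokes a single external result, the Main Lemma of \cite{Galloway}, which directly furnishes a future-\emph{inextendible} timelike geodesic $\gamma\subset D^+(S)$, issuing orthogonally from some $p\in S$, that maximizes distance from each of its points back to $S$. Once this $\gamma$ is in hand, Lemma~\ref{lemma2.1} bounds its length by $t_p$, so $\gamma$ is incomplete; that is the entire argument.

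The gap in your sketch is precisely the step that \cite{Galloway} supplies. A geodesic segment ``realizing $\sup_{q\in\overline{D^+(S)}} d(S,q)$'' need not exist, and a limit of maximizing segments $\gamma_n$ from $S$ to points $q_n$ with $d(S,q_n)\nearrow\sup$ will be a maximizing geodesic from $S$, but you have not argued that it is future-inextendible \emph{in $M$}. Nothing prevents such a limit from simply reaching $H^+(S)$ and then continuing, as a perfectly complete geodesic, into $M\setminus D^+(S)$; past that point the focal estimate tells you only that $\gamma$ stops maximizing to $S$, not that it terminates. Your closing sentence (``otherwise it would re-enter regions forcing a focal point and contradict maximality'') conflates failure to maximize with incompleteness. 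The substantive content of \cite[Main Lemma]{Galloway} is that when $H^+(S)\neq\emptyset$ one can use the past-inextendible null generators of $H^+(S)$ to manufacture a timelike $S$-ray that is trapped in $D^+(S)$ and hence genuinely inextendible in $M$; this is the missing ingredient, and it is not a routine limit-curve argument of the kind you indicate.
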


\begin{proof} If $S$ is a spacelike {\it future Cauchy surface}, i.e., if its future Cauchy horizon $H^+(S) = \emptyset$, then the proof of Theorem \ref{theorem1.1} goes through verbatim. So suppose $H^+(S) \ne  \emptyset$.  In this case one can apply for example \cite[Main Lemma]{Galloway} to obtain an inextendible future timelike geodesic in $D^+(S)$ with initial endpoint $p\in S$ (and initial direction orthogonal to $S$) which maximizes distance from each of its points to $S$. By Lemma \ref{lemma2.1}, this geodesic cannot maximize distance beyond length $t_p$, and so must be incomplete.
\end{proof}

\begin{remark}\label{remark2.6}
Likewise, if the global hyperbolicity assumption in Theorem \ref{theorem1.2} is replaced by the assumption that $S$ is a smooth compact acausal spacelike hypersurface, then we may similarly conclude in that case that some (but not necessarily every) timelike geodesic is future incomplete.
\end{remark}

\section{Rigidity}
\setcounter{equation}{0}

\subsection{Extrinsic curvature flows}

\noindent The proofs of the rigidity theorems below use a modified version of the mean curvature flow of a hypersurface to construct a small pointwise deformation of the hypersurface. We therefore first recall some standard facts about extrinsic curvature flows for hypersurfaces in a Lorentzian manifold. For a detailed treatment, see \cite[Chapter 2]{Gerhardt}. Consider a family of embeddings $F:[0,\varepsilon)\times\Sigma\to M : s\mapsto F(s,\cdot)$. For each $s$, this embeds $\Sigma$ (which we identify with $\Sigma_0$) as a spacelike hypersurface $\Sigma_s$ in $(M,g)$ such that
\begin{equation}
\label{eq3.1}
\begin{split}
\frac{\partial F}{\partial s}=&\,\, \phi\nu\ ,\\
F(0,\cdot)=&\,\, {\rm id}\ ,
\end{split}
\end{equation}
where $\phi$ depends on the mean curvature of $F(s,\cdot)$ in $M$ and $\nu$ is the corresponding timelike unit normal field. When $\phi=H$, a solution of (\ref{eq3.1}) is called a \emph{mean curvature flow}. However, we are interested here in the case of
\begin{equation}
\label{eq3.2} \phi=H_f-c=H-\nabla_{\nu}f-c\ ,
\end{equation}
where $c$ is a constant. Then we call such a solution a \emph{$(c,f)$-mean curvature flow}. Thus fixed points of the flow are hypersurfaces of constant $f$-mean curvature $H_f=c$. Under this flow, $\phi$ evolves as\footnote
{For the derivation, see \cite[Lemma 2.3.4]{Gerhardt}. In the notation of \cite{Gerhardt}, set $\sigma=-1$ and $\Phi={\rm id}$. As well, $F$ as used in \cite{Gerhardt} is our $H$, $f$ as used in \cite{Gerhardt} is our $-c-\nabla_{\nu}f$, and $F_{ij}$ as used in \cite{Gerhardt} is our $h_{ij}$ (the induced metric on $\Sigma_s$). It is then necessary to re-write a $\nabla_{\nu}\nabla_{\nu}f$ term using that $\nabla_{\nu}\nu=\nabla\log \phi$ in our case.}
\begin{equation}
\label{eq3.3} \frac{\partial \phi}{\partial s}=\Delta_{\Sigma_s}\phi-D_{\Sigma_s} f \cdot D_{\Sigma_s} \phi -\left (|K|_{h_s}^2+{\rm Ric}_f(\nu,\nu) \right )\phi\ .
\end{equation}
where $\Delta_{\Sigma_s}\phi := D_{\Sigma_s}\cdot D_{\Sigma_s}\phi$ is the Laplacian (the trace of the Hessian formed from the Levi-Civita connection $D_{\Sigma_s}$ of the induced metric $h_{ij}(s)$) of $\phi$ on $\Sigma_s:=(\Sigma,h_{ij}(s))$ and $D_{\Sigma_s} f \cdot D_{\Sigma_s} \phi =h(s)(D_{\Sigma_s} f, D_{\Sigma_s} \phi)$, but ${\rm Ric}_f$ is the Bakry-\'Emery tensor of the ambient spacetime.

\begin{lemma}\label{lemma3.1}
Let $(\Sigma,h_{ij}^0)\hookrightarrow (M,g)$ be a closed spacelike hypersurface such that $\phi:=H_f-c\le 0$ for all $p\in\Sigma$. There is an $\varepsilon>0$ such that the $(c,f)$-mean curvature flow $F:[0,\varepsilon)\times\Sigma\to (M,g)$ obeying (\ref{eq3.1}, \ref{eq3.2}) exists. Furthermore, $\varepsilon$ can be chosen so that either $\phi(t,q)<0$ for all $t\in(0,\varepsilon)$ and all $q\in\Sigma$ or $\phi\equiv 0$ for all $t\in[0,\varepsilon)$ and all $q\in\Sigma$. In particular, if in addition $\phi(0,p)<0$ for some $p\in\Sigma$, then $\phi(t,q)<0$ for all $t\in(0,\varepsilon)$ and all $q\in\Sigma$.
\end{lemma}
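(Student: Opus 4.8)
The plan is to get the flow from standard parabolic theory and then read the dichotomy off the scalar evolution equation (\ref{eq3.3}) via the weak and strong maximum principles. First I would dispose of existence. Writing the evolving hypersurface as a graph over $\Sigma$, the system (\ref{eq3.1})--(\ref{eq3.2}) with $\phi = H_f - c$ is a quasilinear parabolic equation whose principal part is the spacelike mean-curvature operator; the extra terms $-\nabla_{\nu}f$ and $-c$ are smooth and of lower order and so do not affect parabolicity. Short-time existence of a smooth solution $F:[0,\varepsilon)\times\Sigma\to (M,g)$ for a closed spacelike initial hypersurface is then the standard curvature-flow result of \cite[Chapter 2]{Gerhardt}. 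For any $\varepsilon'\in(0,\varepsilon)$, compactness of $\Sigma$ and smoothness of $F$ make every coefficient in (\ref{eq3.3})---the drift $-D_{\Sigma_s}f$, the potential $-\left(|K|^2_{h_s}+{\rm Ric}_f(\nu,\nu)\right)$, and the induced metrics $h_s$ defining $\Delta_{\Sigma_s}$---smooth and bounded on $[0,\varepsilon']\times\Sigma$, with $\Delta_{\Sigma_s}$ uniformly elliptic. Thus (\ref{eq3.3}) is a uniformly parabolic \emph{linear} equation for $\phi$ with bounded coefficients over a manifold without boundary.

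The one delicate feature is that the zeroth-order coefficient $-\left(|K|^2_{h_s}+{\rm Ric}_f(\nu,\nu)\right)$ has no definite sign, so the maximum principle cannot be applied to (\ref{eq3.3}) directly. I would remove this by an exponential rescaling: since $|K|^2_{h_s}+{\rm Ric}_f(\nu,\nu)$ is bounded below on $[0,\varepsilon']\times\Sigma$, choose a constant $C_0$ with $C_0+|K|^2_{h_s}+{\rm Ric}_f(\nu,\nu)\ge 0$ there and set $\tilde\phi:=e^{-C_0 s}\phi$. A direct computation from (\ref{eq3.3}) gives
\[
\frac{\partial\tilde\phi}{\partial s}=\Delta_{\Sigma_s}\tilde\phi-D_{\Sigma_s}f\cdot D_{\Sigma_s}\tilde\phi+\tilde c\,\tilde\phi,\qquad \tilde c:=-C_0-|K|^2_{h_s}-{\rm Ric}_f(\nu,\nu)\le 0,
\]
while $\tilde\phi$ has the same sign and the same zero set as $\phi$. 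Since $\phi(0,\cdot)\le 0$, the weak maximum principle (valid because $\tilde c\le 0$) yields $\phi\le 0$ on $[0,\varepsilon']\times\Sigma$; as $\varepsilon'<\varepsilon$ is arbitrary, $\phi\le 0$ on all of $[0,\varepsilon)\times\Sigma$.

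To extract the dichotomy, suppose first that $\phi(s_0,q_0)=0$ for some $s_0\in(0,\varepsilon)$ and $q_0\in\Sigma$, and fix $\varepsilon'\in(s_0,\varepsilon)$. Then $\tilde\phi\le 0$ attains its maximum value $0$ at the interior point $(s_0,q_0)$, and since $\tilde c\le 0$ the strong parabolic maximum principle (using connectedness of $\Sigma$ and backward-in-time propagation) forces $\tilde\phi\equiv 0$, hence $\phi\equiv 0$, on $\Sigma\times[0,s_0]$. In particular $\phi(0,\cdot)\equiv 0$, i.e.\ $H_f\equiv c$ on the initial hypersurface; the stationary map $F(s,\cdot)\equiv{\rm id}$ then solves (\ref{eq3.1})--(\ref{eq3.2}), so by uniqueness of the flow it \emph{is} the solution and $\phi\equiv 0$ on all of $[0,\varepsilon)\times\Sigma$. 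If instead $\phi$ has no zero in $(0,\varepsilon)\times\Sigma$, then---being $\le 0$---it satisfies $\phi(s,q)<0$ for every $s\in(0,\varepsilon)$ and $q\in\Sigma$. These are exactly the two stated alternatives. The final assertion is then immediate: if $\phi(0,p)<0$ for some $p$, the alternative $\phi\equiv 0$ is excluded, leaving $\phi<0$ on $(0,\varepsilon)\times\Sigma$.

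The main obstacle is precisely the indefinite sign of the potential term, which prevents a direct appeal to the maximum principle; the rescaling $\tilde\phi=e^{-C_0 s}\phi$ is the device that handles it cleanly, since it changes neither the sign nor the zero set of $\phi$. Everything else---short-time existence and the explicit form (\ref{eq3.3}) of the evolution equation---is quoted from \cite{Gerhardt}, and the maximum-principle steps are standard once the coefficients are known to be smooth and bounded. (If $\Sigma$ is disconnected the dichotomy should be read componentwise, but the ``in particular'' conclusion, which is what the rigidity theorems actually use, holds verbatim on each component.)
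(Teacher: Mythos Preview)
Your proof is correct and follows essentially the same route as the paper: short-time existence from \cite{Gerhardt}, then the strong parabolic maximum principle applied to the linear evolution equation (\ref{eq3.3}) to obtain the dichotomy. The only difference is one of explicitness: you first rescale by $e^{-C_0 s}$ to force the zeroth-order coefficient nonpositive and then invoke weak and strong maximum principles separately, whereas the paper applies \cite[Theorem 2.7]{Lieberman} directly to a nonnegative interior maximum (which that formulation handles without a sign hypothesis on the potential) and leaves the uniqueness step extending $\phi\equiv 0$ to all of $[0,\varepsilon)$ implicit.
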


\begin{proof}
For $\Sigma$ a closed spacelike hypersurface, \cite[Theorem 2.5.19]{Gerhardt}
guarantees a smooth solution of (\ref{eq3.1}, \ref{eq3.2}) on $[0,\varepsilon)\times\Sigma$ for some $\varepsilon>0$. If $\phi(t,q)<0$ for all $(t,q)\in (0,\epsilon)\times\Sigma$, we are done, so say $\phi(\delta,q)\ge 0$ for some $\delta\in (0,\epsilon)$ and some $q\in\Sigma$. Since $\phi(0,\cdot)\le 0$, this implies that the maximum $M$ of $\phi$ on $[0,\delta]\times\Sigma$ is achieved for some $t>0$. By the strong maximum principle \cite[Theorem 2.7] {Lieberman}, this can only happen if $\phi\equiv 0$ on $[0,\delta]\times\Sigma$, which can only occur if $\phi(0,\cdot)\equiv 0$.
\end{proof}

\subsection{Proofs of the rigidity theorems}

\begin{proof}[Proof of Theorem \ref{theorem1.3}.]
Introduce Gaussian normal coordinates in a neighborhood $U$ of $S$ in $J^+(S)$,
\begin{equation}
\label{eq3.4} g = -dt^2 + h_{ij}dx^idx^j\ ,\ t \in [0,\epsilon) \ .
\end{equation}
Let $H = H(t)$ be the mean curvature of the slice $S_t = \{t \} \times S$, with $H_f(t)=H(t)-f'(t)$ and $x(t)=H_f(t)/(n-1)$ as usual.

Because ${\rm Ric}(X,X)\ge 0$ for all timelike vectors $X$, $x$ obeys (\ref{eq2.8}), with $x(0)\le 0$ by assumption. Therefore $x(t)$ obeys
\begin{equation}
\label{eq3.5} x'+\frac{2f'}{(n-1)}x\le -x^2\ ,\ x(0)\le 0\ .
\end{equation}
Multiplying by $e^{2f/(n-1)}$ and integrating to the future along the $t$-geodesics yields
\begin{equation}
\label{eq3.6} e^{\frac{2f(t)}{(n-1)}}x(t)-e^{\frac{2f(0)}{(n-1)}}x(0)=-\int\limits_0^t e^{\frac{2f(u)}{(n-1)}}x^2(u)du\le 0\ .
\end{equation}
Using $x(0)\le 0$, we obtain that $x(t)\le 0$ and thus $H_f(t)\le 0$ for all $t\ge 0$ in the coordinate domain.

Suppose for some $t_0$, $H_f(t_0)$ is strictly less than zero at some point. If $H_f(t_0)<0$ everywhere, then by Theorem \ref{theorem1.1} every timelike geodesic will be future incomplete, contrary to assumption. If, however, there are both points where $H_f(t_0)=0$ and points where $H_f(t_0)<0$, then let the hypersurface and its induced metric be initial data for a $(c,f)$-mean curvature flow (\ref{eq3.1}, \ref{eq3.2}) on an interval $s\in [0,\varepsilon)$ with $c=0$. Then $\phi(0)\le 0$ with $\phi(0)=0$ at some points and $\phi(0)<0$ at others. By Lemma \ref{lemma3.1}, such a flow always exists for $\varepsilon>0$ small enough, and for $s>0$, $\phi$ will be strictly less than zero. Then by (\ref{eq3.2}) with $c=0$, the $f$-mean curvature of the deformed hypersurfaces will be strictly less than zero. Furthermore, the deformed hypersurfaces are spacelike Cauchy surfaces.
Using any of these hypersurfaces as the hypersurface $S$ in the assumptions of Theorem \ref{theorem1.1}, then that theorem implies that every timelike geodesic will be future incomplete, contrary to assumption.

Thus, we have $H_f(t) = 0$ for all $t \in [0,\epsilon)$. Inequality (\ref{eq2.5}) then implies that $H=0$ and, therefore by \eqref{eq2.4}, $\frac{\partial f}{\partial t} = 0$ on $U$. The Raychaudhuri equation (\ref{eq2.3}) then implies that the second fundamental form $K$ vanishes identically, so each $S_t$ is totally geodesic. Solving $0=K:=\frac12 \frac{\partial}{\partial t}h$  gives $h_{ij}(t) = h_{ij}(0)$ and we obtain the desired splitting on $U$. Since the normal geodesics are future complete, this splitting can be continued indefinitely.
\end{proof}

\begin{proof}[Proof of Theorem \ref{theorem1.5}.] Introduce Gaussian normal coordinates in a neighborhood $U$ of $S$ in $J^+(S)$ and define the slices $S_t = \{t \} \times S$ as above.
The normalized $f$-mean curvature $x(t):=H_f(t)/(n-1)$ of $S_t$ satisfies (\ref{eq2.14}) with $x(0) \le -1$. Observe that if we choose $\epsilon$ sufficiently small so that $H_f(t) < 0$ for all $t\in [0,\epsilon)$, then $xf' \ge 0$ for $t\in [0,\epsilon)$, since by assumption $f'\equiv\gamma'\cdot \nabla f \le 0$. Then (\ref{eq2.14}) reduces to $x'\le 1-x^2$ with $x(0) \le -1$. Elementary comparison with the solution to $y' = 1 -y^2$, $y(0) = -1$, implies that $x(t) \le -1$ for all $t\in [0,\epsilon)$. Hence, we have that $H_f(t) \le -(n-1)$ for all $t \in [0,\epsilon)$.

Suppose that, for some $t_0$, $H_f(t_0)$ is strictly less than $-(n-1)$ at some point. Then, as in the proof of Theorem \ref{theorem1.3}, we can employ a $(c,f)$-mean curvature flow (\ref{eq3.1}, \ref{eq3.2}), this time with $c=-(n-1)$, and invoke Lemma \ref{lemma3.1} to obtain a nearby spacelike Cauchy surface with $f$-mean curvature $H_f<-(n-1)$ pointwise. Using this hypersurface as the hypersurface $S$ in the assumptions of Theorem \ref{theorem1.2}, then that theorem again implies that every timelike geodesic will be future incomplete, contrary to assumption.

Thus, we have $H_f(t) = -(n-1)$ for all $t \in [0,\epsilon)$. Inequality (\ref{eq2.5}) then implies that
$\frac{\partial f}{\partial t} = 0$ on $U$, and $f$-mean curvature reduces to ordinary mean curvature.
Hence, $H(t) = -(n-1)$, and the Raychaudhuri equation (\ref{eq2.3}) implies that the shear $\sigma$ vanishes identically. It follows that each $S_t$ is umbilic, with second fundamental form $K_{ij} = -\frac12 \frac{\partial h_{ij}}{\partial t} = h_{ij}$. Solving for $h_{ij}$ gives $h_{ij}(t) = e^{-2t}h_{ij}(0)$ and we obtain the desired warped product splitting on $U$. Since the normal geodesics are future complete, this splitting can be continued indefinitely.
\end{proof}

\section{Application: Scalar-tensor cosmology}
\setcounter{equation}{0}

\noindent In cosmology, so-called \emph{big bang} singularities are often of primary interest. In a spacetime with a big bang, every timelike geodesic is \emph{past}-incomplete. Big bang singularities can be addressed using time-reversed versions of the theorems in the Introduction. In what follows, we continue to define the $f$-mean curvature of a spacelike hypersurface with respect to the future direction.

 \begin{theorem}\label{theorem4.1}
Let $M$ be a spacetime satisfying ${\rm Ric}_f(X,X) \ge 0$ for all timelike vectors
$X$, and suppose that $f \le k$. Let $S$ be a smooth compact spacelike Cauchy surface for $M$ with strictly positive $f$-mean curvature, $H_f(S) > 0$. Then every timelike geodesic is past incomplete.
\end{theorem}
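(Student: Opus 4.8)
The plan is to deduce Theorem \ref{theorem4.1} directly from Theorem \ref{theorem1.1} by reversing the time orientation of $M$. The Bakry-\'Emery energy condition ${\rm Ric}_f(X,X)\ge 0$ and the bound $f\le k$ are statements about tensors and scalars that make no reference to a choice of time orientation, so they transfer unchanged to the time-reversed spacetime $\tilde M$, which carries the same metric $g$ and weight function $f$ but the opposite time orientation. Likewise, $S$ remains a smooth compact spacelike Cauchy surface, since the Cauchy property is insensitive to the choice of time orientation. The one hypothesis that must be tracked carefully is the sign of the $f$-mean curvature, so that is where I would focus.

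First I would verify that the future $f$-mean curvature reverses sign under time reversal. The future-directed unit normal in $\tilde M$ is $\tilde\nu=-\nu$. From the definition (\ref{eq2.1}) the second fundamental form becomes $\tilde K(X,Y)=-\tilde\nu\cdot(\nabla_X Y)=\nu\cdot(\nabla_X Y)=-K(X,Y)$, hence $\tilde H=-H$ by (\ref{eq2.2}); and the directional derivative satisfies $\nabla_{\tilde\nu}f=-\nabla_{\nu}f$. Combining these in the definition (\ref{eq2.4}) gives $\tilde H_f=\tilde H-\nabla_{\tilde\nu}f=-(H-\nabla_{\nu}f)=-H_f$. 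Consequently the hypothesis $H_f(S)>0$ for the future $f$-mean curvature measured in $M$ becomes $\tilde H_f(S)<0$ for the future $f$-mean curvature measured in $\tilde M$, which is exactly the sign condition demanded by Theorem \ref{theorem1.1}.

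With these observations in hand, $\tilde M$ together with $S$ satisfies every hypothesis of Theorem \ref{theorem1.1}. That theorem then asserts that every timelike geodesic in $\tilde M$ is future incomplete. Since the underlying manifold and metric are unchanged, the timelike geodesics of $\tilde M$ are precisely those of $M$, and the future direction of $\tilde M$ is the past direction of $M$; thus future incompleteness in $\tilde M$ is exactly past incompleteness in $M$. This yields the conclusion that every timelike geodesic of $M$ is past incomplete.

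There is no substantive obstacle here: the entire content of the statement is supplied by Theorem \ref{theorem1.1}, and the only point requiring genuine care is the bookkeeping of the sign flip $\tilde H_f=-H_f$, which guarantees that the strictly positive $f$-mean curvature in the original orientation really does produce the strictly negative $f$-mean curvature needed to invoke the earlier result.
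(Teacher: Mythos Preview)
Your argument is correct and is exactly the time-reversal of Theorem \ref{theorem1.1} that the paper has in mind; the paper does not write out a proof at all but simply declares (see Remark \ref{remark4.3}) that the result is the obvious time-dual of Theorem \ref{theorem1.1}. Your careful tracking of the sign flip $\tilde H_f=-H_f$ is precisely the bookkeeping that underlies the word ``obvious.''
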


\begin{theorem}\label{theorem4.2}
Let $M$ be a spacetime satisfying ${\rm Ric}_f(X,X) \ge -(n-1)$ for all unit timelike vectors $X$, and having smooth compact spacelike Cauchy surface $S$. Suppose that either
\begin{enumerate}
\item[(i)] $f \le k$ and the $f$-mean curvature of $S$ satisfies
\begin{equation}
\label{eq4.1} H_f(S) > (n-1)e^{\frac{2(k- N)}{(n-1)}}  \,,
\end{equation}
where $N = \inf_{p \in S} f(p)$, or
\item[(ii)] $\nabla f$ is past causal and the $f$-mean curvature of $S$ satisfies $H_f(S) > (n-1)$.
\end{enumerate}
Then every timelike geodesic is past incomplete.
\end{theorem}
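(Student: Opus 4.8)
The plan is to obtain Theorem~\ref{theorem4.2} as an immediate consequence of Theorem~\ref{theorem1.2} by exploiting the time-reversal symmetry of the hypotheses, exactly as suggested by the remark that these are ``time-reversed versions'' of the introductory theorems. Write $\bar M$ for the spacetime $(M,g)$ equipped with the \emph{opposite} time orientation, so that a curve is future-directed in $\bar M$ precisely when it is past-directed in $M$. Geodesics, the metric $g$, and the hypersurface $S$ are unaffected by this change, and a timelike geodesic is past incomplete in $M$ if and only if it is future incomplete in $\bar M$. Thus it suffices to check that the hypotheses of Theorem~\ref{theorem4.2} for $M$ become the hypotheses of Theorem~\ref{theorem1.2} for $\bar M$, whence the conclusion of Theorem~\ref{theorem1.2} applied to $\bar M$ yields the desired past incompleteness in $M$.

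First I would track how each hypothesis transforms. The Bakry-\'Emery tensor ${\rm Ric}_f={\rm Ric}+{\rm Hess}\,f$ is built from objects insensitive to time orientation, and the set of unit timelike vectors is the same in $M$ and $\bar M$; hence the energy condition ${\rm Ric}_f(X,X)\ge-(n-1)$ carries over verbatim. Likewise $S$ remains a smooth compact spacelike Cauchy surface, $f\le k$ is unchanged, and $N=\inf_{p\in S}f(p)$ is the same number. The one quantity that changes sign is the $f$-mean curvature: if $\nu$ is the future unit normal of $S$ in $M$, then the future unit normal in $\bar M$ is $\bar\nu=-\nu$, so by \eqref{eq2.1}, \eqref{eq2.2}, and \eqref{eq2.4} the second fundamental form, its trace, and the normal derivative $\nabla_\nu f$ all reverse sign, giving $\bar H_f(S)=-H_f(S)$. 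Finally, $\nabla f$ being past causal in $M$ is precisely the statement that $\nabla f$ is future causal in $\bar M$.

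With this dictionary in hand the verification is mechanical. For part~(i), the hypothesis $H_f(S)>(n-1)e^{2(k-N)/(n-1)}$ becomes $\bar H_f(S)=-H_f(S)<-(n-1)e^{2(k-N)/(n-1)}$, which is exactly inequality \eqref{eq1.2} for $\bar M$; together with $f\le k$ this is hypothesis~(i) of Theorem~\ref{theorem1.2}. For part~(ii), the hypotheses that $\nabla f$ is past causal and $H_f(S)>(n-1)$ become the statements that $\nabla f$ is future causal in $\bar M$ and $\bar H_f(S)<-(n-1)$, which is hypothesis~(ii) of Theorem~\ref{theorem1.2}. In either case Theorem~\ref{theorem1.2} applied to $\bar M$ shows every timelike geodesic is future incomplete in $\bar M$, i.e. past incomplete in $M$.

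The argument involves no genuine analytic difficulty; the only point demanding care is the bookkeeping of signs under time reversal, and in particular confirming that $H_f$---which is defined relative to a chosen future direction---flips to $-H_f$, while the orientation-free ingredients (${\rm Ric}_f$, $f$, $N$, the spacelike Cauchy character of $S$, and the causal type of $\nabla f$ up to its time sense) transform as claimed. Once these conventions are pinned down, the theorem reduces entirely to Theorem~\ref{theorem1.2}, and no new estimates are needed.
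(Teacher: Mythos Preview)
Your proposal is correct and follows exactly the approach the paper intends: the paper's own ``proof'' is simply Remark~\ref{remark4.3}, which declares the result obvious as the time-reversed version of Theorem~\ref{theorem1.2}. Your careful bookkeeping of how $H_f$, $\nabla f$, and the orientation-independent quantities transform under reversal of time orientation is precisely the content that the word ``obvious'' is meant to encode.
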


\begin{remark}\label{remark4.3}
The proofs of these theorems are obvious. Theorems \ref{theorem1.3}, \ref{theorem1.5}, and \ref{theorem2.5} and Remark \ref{remark2.6} also have obvious time-reversed versions.
\end{remark}

Scalar fields are routinely invoked in modern cosmology. Scalar fields arise in the cosmological inflation scenario, string cosmology, and models that attempt to explain the observed accelerating expansion of the present universe. Cosmological inflation is usually described by scalar fields coupled to Einstein gravity, but string models use a Brans-Dicke type dilaton coupling, while models of the observed acceleration in the cosmological expansion rely on a variety of scenarios including scalar-tensor gravitation.

The primary examples of scalar-tensor gravitation theories are the Brans-Dicke family of theories. In $n=4$ spacetime dimensions, this family is parametrized by a number $\omega\in \left ( -\frac32,\infty\right )$. In addition to the spacetime metric, the theory also contains a scalar field $\varphi>0$. In the conformal gauge usually called the \emph{Jordan frame}, the equations of Brans-Dicke theory are given by the system \cite[p 9]{Faraoni}
\begin{eqnarray}
\label{eq4.2} {\rm Ric}-\frac12 R g+\frac{1}{2\varphi}V(\varphi) g
&=&\frac{1}{\varphi}\left ( {\rm Hess\,}\varphi
-g\square \varphi \right )+ \frac{8\pi}{\varphi}T\nonumber \\
&& +\frac{\omega}{\varphi^2} \left ( \nabla \varphi \otimes \nabla \varphi
-\frac12 g \vert \nabla \varphi \vert ^2\right )\ ,\\
\label{eq4.3} \square \varphi &=&\frac{1}{3+2\omega}\left ( 8\pi {\rm tr}_gT +\varphi V'(\varphi) -2V(\varphi)\right )\ .
\end{eqnarray}
Here $\square:=\nabla^i\nabla_i$ is the d'Alembert operator (or spacetime scalar Laplacian), $R:={\rm tr}_g{\rm Ric}$ is the scalar curvature, and $T$ is the stress-energy-momentum tensor of nongravitational matter, and does not depend on $\varphi$ (this is called \emph{minimal coupling}). Note that the notation $\vert d\varphi \vert^2$ is shorthand for the \emph{Lorentzian} norm, so $\vert d\varphi \vert^2<0$  when $d\varphi$ is timelike.

The function $V(\varphi)$ was not present in the original formulation \cite{BD}, but is used in cosmological models. It is typically taken to be a homogeneous function of $\varphi$, often a polynomial. Define
\begin{eqnarray}
\label{eq4.4} f&:=&-\log\varphi\ ,\\
\label{eq4.5} W(f)&:=&-\frac16 e^f V\left ( e^{-f}\right )\ ,\\
\label{eq4.6} \Lambda(f)&:=&\frac{6(1+\omega)W(f)-3W'(f)}{(3+2\omega)}=-\frac{1}{2(3+2\omega)} \left [ V'(\varphi)+(1+2\omega)\frac{1}{\varphi}V(\varphi)\right ]\ .
\end{eqnarray}
Then we can re-write the above equations in the form
\begin{eqnarray}
\label{eq4.7} {\rm Ric}_f +\Lambda(f)g&=& 8\pi e^f \left [ T-\left ( \frac{1+\omega}{3+2\omega} \right ) g {\rm tr}_g T \right ] +(1+\omega)df \otimes df \\
\label{eq4.8} \square f -\vert df \vert^2 &=& -\frac{2}{3+2\omega} \left ( 3W'(f)+3W(f)
+4\pi e^f{\rm tr}_g T \right ) \ ,
\end{eqnarray}
using that $\varphi=:e^{-f}$.

Consider the special case of $V(\varphi)=-\left ( \frac{3+2\omega}{1+\omega} \right )\lambda \varphi$ for some constant $\lambda$. Then $W=\left (\frac{3+2\omega}{1+\omega}  \right )\frac{\lambda}{6}$ and $\Lambda(f)=\lambda$. In other words, a linear potential yields a cosmological constant. (From (\ref{eq4.3}), we also see that a linear potential with $\frac{\lambda}{1+\omega}>0$ gives $\varphi$ a mass.)

The following two simple lemmata translate between inequalities expressed in a form natural to Brans-Dicke theory and the Bakry-\'Emery form used in the assumptions of our theorems. Since the Brans-Dicke theory is posed in $n=4$ dimensions (though it can be formulated for $n\ge 3$), we restrict consideration to $n=4$ from here onward.

\begin{lemma} \label{lemma4.4}
Let $X$ be an arbitrary unit timelike vector $g(X,X)=-1$. Assume that $\omega \ge -1$ and that the $\omega$-energy condition (see, e.g., \cite{Woolgar}) holds, so that
\begin{equation}
\label{eq4.9} T(X,X) +\left (\frac{1+\omega}{3+2\omega}\right )\left ( {\rm tr}_gT\right ) \ge 0\ .
\end{equation}
\begin{enumerate}
\item[(a)] If $V'(\varphi)+(1+2\omega)\frac{1}{\varphi}V(\varphi)\le 0$ then ${\rm Ric}_f(X,X)\ge 0$.
\item[(b)] If $V'(\varphi)+(1+2\omega)\frac{1}{\varphi}V(\varphi)\le 6(3+2\omega)$ then ${\rm Ric}_f(X,X)\ge -3$.
\end{enumerate}
\end{lemma}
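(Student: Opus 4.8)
The plan is to read $\mathrm{Ric}_f(X,X)$ directly off the Jordan-frame field equation (\ref{eq4.7}) and then bound the resulting expression term by term, so that both parts reduce to controlling the sign of the cosmological function $\Lambda(f)$. Concretely, I would contract both slots of (\ref{eq4.7}) with the unit timelike vector $X$. Using $g(X,X)=-1$, the cosmological term on the left contributes $\Lambda(f)g(X,X)=-\Lambda(f)$, and the trace part of the matter term flips sign, yielding the identity
\[
{\rm Ric}_f(X,X) = \Lambda(f) + 8\pi e^f\left[T(X,X) + \left(\frac{1+\omega}{3+2\omega}\right){\rm tr}_g T\right] + (1+\omega)\left(df(X)\right)^2 \, .
\]
This identity is the backbone of both parts; everything else is sign bookkeeping.

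Next I would observe that the last two terms are nonnegative. The bracket multiplying $8\pi e^f$ is exactly the quantity appearing in the $\omega$-energy condition (\ref{eq4.9}), hence nonnegative, and $e^f=1/\varphi>0$ since $\varphi>0$, so the whole matter term is $\ge 0$. For the gradient term, $df(X)=X\cdot\nabla f$ is a real scalar, so its square is nonnegative, while the coefficient $1+\omega$ is nonnegative precisely because $\omega\ge -1$. Combining these, I obtain the clean lower bound ${\rm Ric}_f(X,X)\ge \Lambda(f)$, valid for every unit timelike $X$ under the stated hypotheses.

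Finally I would estimate $\Lambda(f)$ from below using its expression (\ref{eq4.6}) in terms of the potential. Here it is crucial that $3+2\omega>0$ (which holds since $\omega\ge -1$, and in any case $\omega>-\tfrac32$), so that the prefactor $-\tfrac{1}{2(3+2\omega)}$ is negative. For part (a), the hypothesis $V'(\varphi)+(1+2\omega)\varphi^{-1}V(\varphi)\le 0$ makes $\Lambda(f)$ a negative multiple of a nonpositive quantity, hence $\Lambda(f)\ge 0$, giving ${\rm Ric}_f(X,X)\ge 0$. For part (b), the same bracket is bounded by $6(3+2\omega)$, so $\Lambda(f)\ge -\tfrac{1}{2(3+2\omega)}\cdot 6(3+2\omega)=-3$, giving ${\rm Ric}_f(X,X)\ge -3$. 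There is no genuine analytic obstacle in this argument; the only point requiring care is the sign bookkeeping, and in particular the fact that the two sign conditions $\omega\ge -1$ (which controls the gradient term) and $3+2\omega>0$ (which controls $\Lambda$) are exactly what make both contributions cooperate.
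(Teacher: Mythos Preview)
Your argument is correct and is exactly the approach the paper takes: its one-line proof ``Consider (\ref{eq4.7}) termwise, using $g(X,X)=-1$ and (\ref{eq4.6})'' is precisely the computation you have spelled out. You have simply made explicit the sign bookkeeping (the $\omega$-energy condition controls the matter term, $\omega\ge -1$ controls the gradient term, and $3+2\omega>0$ gives the right sign on $\Lambda$) that the paper leaves to the reader.
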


\begin{proof} Consider (\ref{eq4.7}) termwise, using $g(X,X)=-1$ and (\ref{eq4.6}).\end{proof}

\begin{lemma}\label{lemma4.5}
\begin{eqnarray}
\label{eq4.10} H_f>0 &\Longleftrightarrow& H>-\frac{1}{\varphi}\nabla_{\nu}\varphi\ ,\\
\label{eq4.11} H_f(S) > 3e^{\frac23(k- N)} &\Longleftrightarrow& H>-\frac{1}{\varphi}\nabla_{\nu}\varphi + 3\left ( \varphi_1/\varphi_0\right )^{2/3}\ , \\
\label{eq4.12} H_f(S) > 3 &\Longleftrightarrow& H>-\frac{1}{\varphi}\nabla_{\nu}\varphi +3\ ,
\end{eqnarray}
where $k:=\sup_{J^-(S)}f$, $\varphi_0:=\inf_{J^-(S)}\varphi$, and $\varphi_1:=\sup_S\varphi$.
\end{lemma}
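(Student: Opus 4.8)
The plan is to prove each equivalence as a direct algebraic translation, using only the definition $f=-\log\varphi$ from (\ref{eq4.4}) together with the definition (\ref{eq2.4}) of $H_f$. The core computation is the relationship between $\nabla_\nu f$ and $\nabla_\nu\varphi$. First I would differentiate $f=-\log\varphi$ along the unit normal to obtain
\begin{equation}
\nabla_\nu f = -\frac{1}{\varphi}\nabla_\nu\varphi\ ,
\end{equation}
so that $H_f = H-\nabla_\nu f = H+\frac{1}{\varphi}\nabla_\nu\varphi$. Substituting this into each of the three threshold inequalities immediately rearranges them: $H_f>0$ becomes $H>-\frac{1}{\varphi}\nabla_\nu\varphi$, which is (\ref{eq4.10}), and similarly $H_f>3$ becomes (\ref{eq4.12}) with the additive $3$ carried along. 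These two are essentially automatic once the normal-derivative identity is in hand.

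The only equivalence requiring genuine care is (\ref{eq4.11}), because of the exponential factor on the left. Here I would use that $n=4$ (so $n-1=3$ and the exponent $\frac{2(k-N)}{n-1}$ becomes $\frac{2}{3}(k-N)$), and then translate the constants $k$ and $N$ through $f=-\log\varphi$. With $k:=\sup_{J^-(S)}f$ and $\varphi_0:=\inf_{J^-(S)}\varphi$, the monotonicity of $-\log$ gives $k=-\log\varphi_0$; with $N:=\inf_S f$ and $\varphi_1:=\sup_S\varphi$, likewise $N=-\log\varphi_1$. (I note this is the time-reversed setting of Theorem \ref{theorem4.2}(i), so the relevant supremum of $f$ is taken over $J^-(S)$ and $N$ is the infimum of $f$ over $S$; this matches the definitions stated in the lemma.) Then
\begin{equation}
e^{\frac{2}{3}(k-N)} = e^{\frac{2}{3}\left(-\log\varphi_0+\log\varphi_1\right)} = \left(\frac{\varphi_1}{\varphi_0}\right)^{2/3}\ ,
\end{equation}
so $3e^{\frac{2}{3}(k-N)} = 3(\varphi_1/\varphi_0)^{2/3}$, and combining with the normal-derivative identity for $H_f$ yields (\ref{eq4.11}).

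The main obstacle, such as it is, is purely bookkeeping: one must be careful that the suprema and infima defining $k$, $N$, $\varphi_0$, and $\varphi_1$ are taken over the correct sets and that the sign reversal induced by the logarithm is tracked consistently (an infimum of $\varphi$ corresponds to a supremum of $f$ and vice versa). There is no analytic content beyond the smoothness of $\varphi>0$, which guarantees $f$ is well defined and that the normal derivative exists; since $\varphi$ is strictly positive the logarithm and its derivative are unproblematic. I expect the entire proof to reduce to the one-line differentiation identity plus the exponent manipulation above, which is presumably why the paper states the two lemmata as ``simple.''
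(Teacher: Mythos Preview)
Your proposal is correct and follows exactly the paper's approach: the paper's own proof is simply ``Use $f=-\log\varphi$'' together with the observation that the constants $k,N$ translate to $\varphi_0,\varphi_1$, which is precisely the differentiation identity $\nabla_\nu f=-\frac{1}{\varphi}\nabla_\nu\varphi$ plus the exponent manipulation you spell out. Your write-up is in fact more detailed than the paper's, and your care with the sup/inf bookkeeping under $-\log$ is exactly what is needed.
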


\begin{proof}
Use $f=-\log \varphi$. We note that the left-hand expression in (\ref{eq4.11}) is equation (\ref{eq1.2}) with $n=4$ and with the sense of time reversed. The right-hand expression is simply what one obtains by replacing $k$ and $N$, which are defined in terms of $f$, by $\varphi_0$ and $\varphi_1$, which are defined in terms of $\varphi$.
\end{proof}

Then the following theorems give conditions under which Brans-Dicke theory must have a big bang singularity.

\begin{theorem}\label{theorem4.6}
Let $(M,g,\varphi)$ be a spacetime governed by equations (\ref{eq4.2}, \ref{eq4.3}) for some fixed $\omega\ge -1$. Assume that \begin{enumerate}
\item[(a)] $T$ obeys the $\omega$-energy condition (\ref{eq4.9}) for all timelike vectors $X$,
\item[(b)] $\varphi \ge \varphi_0>0$ for some $\varphi_0\in {\mathbb R}^+$,
\item[(c)] $V'(\varphi)+(1+2\omega)\frac{1}{\varphi}V(\varphi)\le 0$, and
\item[(d)] there is a smooth compact Cauchy surface $S$ for $M$ with mean curvature $H$ obeying
\begin{equation}
\label{eq4.13} H(S) > -\frac{1}{\varphi}\nabla_{\nu}\varphi \ .
\end{equation}
\end{enumerate}
Then every timelike geodesic is past incomplete.
\end{theorem}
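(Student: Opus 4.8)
The plan is to reduce Theorem~\ref{theorem4.6} entirely to the time-reversed singularity theorem, Theorem~\ref{theorem4.1}, by re-expressing each Brans-Dicke hypothesis (a)--(d) in Bakry-\'Emery language through the substitution $f=-\log\varphi$ of (\ref{eq4.4}) and the dictionary furnished by Lemmas~\ref{lemma4.4} and~\ref{lemma4.5}. Once all four hypotheses are translated, the conclusion is immediate. Thus the entire content of the argument is a careful bookkeeping of signs and of the passage between the scalar field $\varphi$ and the weight $f$; there is no new analysis.

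First I would set $f=-\log\varphi$, so that $\nabla_\nu f=-\frac{1}{\varphi}\nabla_\nu\varphi$ and hence $H_f=H+\frac{1}{\varphi}\nabla_\nu\varphi$. Then I would translate the hypotheses one at a time. Hypotheses (a) (the $\omega$-energy condition) and (c), together with $\omega\ge -1$, are exactly the hypotheses of Lemma~\ref{lemma4.4}(a), which yields ${\rm Ric}_f(X,X)\ge 0$ for every unit timelike $X$; since ${\rm Ric}_f(\cdot,\cdot)$ is a quadratic form, this extends by homogeneity (writing $Y=cX$ with $X$ unit timelike) to all timelike $X$, matching the curvature hypothesis of Theorem~\ref{theorem4.1}. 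Hypothesis (b), $\varphi\ge\varphi_0>0$, gives $f=-\log\varphi\le -\log\varphi_0=:k$ globally, so the weight is bounded above as required. Finally, hypothesis (d), $H(S)>-\frac{1}{\varphi}\nabla_\nu\varphi$, is precisely the statement $H_f(S)>0$ by the equivalence (\ref{eq4.10}) of Lemma~\ref{lemma4.5}.

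With ${\rm Ric}_f(X,X)\ge 0$ for all timelike $X$, with $f\le k$, and with a smooth compact spacelike Cauchy surface $S$ satisfying $H_f(S)>0$, the hypotheses of Theorem~\ref{theorem4.1} are met, and that theorem yields that every timelike geodesic is past incomplete. The step requiring the most care is not any single estimate but the consistent use of the sign conventions: I would verify that the future-oriented definition of $H_f$ used throughout section~4 makes (d) coincide with $H_f(S)>0$ (rather than $<0$), so that it is the \emph{past}-directed Theorem~\ref{theorem4.1}, not Theorem~\ref{theorem1.1}, that applies, and I would check that the energy and curvature inequalities have not been inadvertently reversed in passing from $\varphi$ to $f$. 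I would also confirm that the bound $f\le k$ is genuinely global, which (b) guarantees, rather than merely holding on $S$, since Theorem~\ref{theorem4.1} requires $f\le k$ throughout $M$.
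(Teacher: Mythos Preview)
Your proposal is correct and follows exactly the paper's approach: translate hypotheses (a)--(d) into the Bakry-\'Emery setting via Lemmas~\ref{lemma4.4} and~\ref{lemma4.5} and then invoke Theorem~\ref{theorem4.1}. The paper's own proof is a one-line reference to these lemmata, and your write-up simply spells out the bookkeeping in more detail.
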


\begin{proof}
Using Lemmata \ref{lemma4.4} and \ref{lemma4.5} it is easily seen that our assumptions verify the assumptions of Theorem \ref{theorem4.1}, which we then invoke.
\end{proof}

\begin{theorem}\label{theorem4.7}
Let $(M,g,\varphi)$ be a spacetime governed by equations (\ref{eq4.2}, \ref{eq4.3}) with $\omega\ge -1$. Assume that \begin{enumerate}
\item[(a)] $T$ obeys the $\omega$-energy condition (\ref{eq4.9}) for all timelike vectors $X$ and
\item[(b)] $V'(\varphi)+(1+2\omega)\frac{1}{\varphi}V(\varphi)\le 6(3+2\omega)$, and
\end{enumerate}
Further assume that either
\begin{enumerate}
\item[(c.i)] there is a smooth compact Cauchy surface $S$ for $M$ with $\varphi_0:=\inf_{J^-(S)}\varphi>0$ and
\item[(d.i)] mean curvature $H$ of $S$ obeys
\begin{equation}
\label{eq4.14} H(S) > 3\left ( \varphi_1/\varphi_0\right )^{2/3} -\frac{1}{\varphi}\nabla_{\nu}\varphi \ ,
\end{equation}
where $\varphi_1 = \sup_{p \in S} \varphi(p)$
\end{enumerate}
or
\begin{enumerate}
\item[(c.ii)] $\nabla \varphi$ is future causal and
\item[(d.ii)] there is a smooth compact Cauchy surface $S$ for $M$ with mean curvature $H$ obeying
\begin{equation}
\label{eq4.15} H(S) > 3-\frac{1}{\varphi}\nabla_{\nu}\varphi \ .
\end{equation}
\end{enumerate}
Then every timelike geodesic is past incomplete.
\end{theorem}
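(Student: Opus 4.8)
The plan is to mirror the proof of Theorem \ref{theorem4.6}: reduce everything to the time-reversed big-bang theorem, Theorem \ref{theorem4.2}, by translating the Brans-Dicke hypotheses into Bakry-\'Emery form via Lemmata \ref{lemma4.4} and \ref{lemma4.5}. Setting $f:=-\log\varphi$ as in (\ref{eq4.4}), the potential bound (b) is precisely the hypothesis of Lemma \ref{lemma4.4}(b), so together with the $\omega$-energy condition (a) it yields ${\rm Ric}_f(X,X)\ge -3=-(n-1)$ for $n=4$ and all unit timelike $X$. This supplies the curvature hypothesis common to both parts of Theorem \ref{theorem4.2}, and it remains only to match each of the two branches of hypotheses to one of the two sub-cases (i), (ii) of that theorem.

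First I would handle the branch (c.i)--(d.i). Since $\varphi_0:=\inf_{J^-(S)}\varphi>0$ by (c.i), the function $f=-\log\varphi$ is bounded above on $J^-(S)$ by $k:=\sup_{J^-(S)}f=-\log\varphi_0$, which is the bound $f\le k$ needed for part (i) of the past-directed Theorem \ref{theorem4.2}. With $N:=\inf_{p\in S}f(p)=-\log\varphi_1$, one computes $e^{\frac23(k-N)}=(\varphi_1/\varphi_0)^{2/3}$, so the equivalence (\ref{eq4.11}) converts the mean-curvature hypothesis (d.i) into $H_f(S)>3\,e^{\frac23(k-N)}$. This is exactly condition (i) of Theorem \ref{theorem4.2} with $n=4$, so that theorem applies and every timelike geodesic is past incomplete.

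For the branch (c.ii)--(d.ii), I would observe that $\nabla f=-\tfrac1\varphi\nabla\varphi$, so $\nabla\varphi$ future causal (assumption (c.ii)) is equivalent to $\nabla f$ being past causal, which is the hypothesis of part (ii) of the past-directed Theorem \ref{theorem4.2}. The equivalence (\ref{eq4.12}) then converts (d.ii), namely $H(S)>3-\frac1\varphi\nabla_\nu\varphi$, into $H_f(S)>3=(n-1)$, and again Theorem \ref{theorem4.2} gives past incompleteness. In either branch the conclusion is immediate once the hypotheses are verified.

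The only point demanding care---the nearest thing to an obstacle in an otherwise mechanical argument---is sign and domain bookkeeping. One must verify that the suprema and infima defining $k$, $\varphi_0$, and $\varphi_1$ are taken over the sets ($J^-(S)$ versus $S$) appropriate to the past-directed theorem, and that the substitution $f=-\log\varphi$ reverses the causal character (future versus past) of the gradient correctly. Both are already encoded in the statement of Lemma \ref{lemma4.5}, which fixes $k:=\sup_{J^-(S)}f$, $\varphi_0:=\inf_{J^-(S)}\varphi$, and $\varphi_1:=\sup_S\varphi$; once these conventions are in force, the proof is simply the two invocations above.
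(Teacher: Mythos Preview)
Your proposal is correct and follows exactly the paper's approach: the paper's own proof is the one-line remark that Lemmata \ref{lemma4.4} and \ref{lemma4.5} verify the hypotheses of Theorem \ref{theorem4.2}, together with the observation that $\nabla\varphi$ future causal is equivalent to $\nabla f$ past causal. You have simply spelled out the bookkeeping in more detail than the paper does.
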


\begin{proof}
The proof consists in using Lemmata \ref{lemma4.4} and \ref{lemma4.5} to verify the assumptions of Theorem \ref{theorem4.2}. We note that $\nabla\varphi$ is future causal iff $\nabla f$ is  past causal.
\end{proof}

Brans-Dicke theories admit a so-called \emph{Einstein frame} formulation, meaning that solutions of the theory are described by a metric ${\tilde g}$ related to the Jordan frame metric $g$ by
\begin{equation}
\label{eq4.16} {\tilde g}=\varphi g\ .
\end{equation}
Equation (\ref{eq4.2}) can then be written in the form
\begin{equation}
\label{eq4.17}{\widetilde {\rm Ric}}=\frac{8\pi}{\varphi}
\left [ T-\frac12 {\tilde g} {\rm tr}_{\tilde g}T\right ]
+\frac{(3+2\omega)}{2\varphi^2}d \varphi \otimes d \varphi
+\frac{1}{2\varphi^2}V(\varphi){\tilde g}\ ,
\end{equation}
where ${\widetilde {\rm Ric}}$ is the Ricci tensor of ${\tilde g}$. Except for the $\frac{1}{\varphi}$ multiplying $T$, this equation has the form of Einstein's equation for general relativity with a scalar field $\log \varphi$.

Then it is reasonable to ask whether Theorems \ref{theorem4.6} and \ref{theorem4.7} follow from standard theorems applied to the Einstein frame formulation, or whether they are genuinely distinct, new results. To address this question, we focus only on the incompleteness statement in Theorem \ref{theorem4.6}. Similar considerations will apply to the splitting result and to Theorem \ref{theorem4.7}. We have the following analogue of Theorem \ref{theorem4.6}, whose proof follows immediately from standard results that make no use of the Bakry-\'Emery tensor.

\begin{theorem}\label{theorem4.8}
Let $(M,g,\varphi)$ be a spacetime governed by equation (\ref{eq4.17}) for any fixed $\omega> -3/2$ and $\varphi>0$. Assume that \begin{enumerate}
\item[(a)] $T$ obeys the \emph{strong energy condition} $T(X,X)-\frac12{\tilde g}(X,X) tr_{\tilde g}T\ge 0$ for all timelike vectors $X$,
\item[(b)] $V(\varphi)\le 0$, and
\item[(c)] there is a smooth compact Cauchy surface $S$ for $M$ with mean curvature $H$ obeying
\begin{equation}
\label{eq4.18} H(S) > 0 \ .
\end{equation}
\end{enumerate}
Then every timelike ${\tilde g}$-geodesic is past incomplete. Furthermore, if $\varphi\ge \varphi_0>0$, then every past timelike geodesic in $(M,g)$ is incomplete.
\end{theorem}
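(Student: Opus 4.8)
The plan is to verify that the Einstein frame metric $\tilde g=\varphi g$ satisfies the timelike convergence condition, then to invoke the standard (cosmological) Hawking singularity theorem for $\tilde g$, and finally to transfer incompleteness from $\tilde g$ to $g$ by a conformal length comparison. First I would contract equation (\ref{eq4.17}) with an arbitrary $\tilde g$-unit timelike vector $X$, $\tilde g(X,X)=-1$, and examine the three terms on the right separately. The matter term $\frac{8\pi}{\varphi}\left[T(X,X)-\frac12\tilde g(X,X){\rm tr}_{\tilde g}T\right]$ is nonnegative by assumption (a) together with $\varphi>0$; the gradient term $\frac{3+2\omega}{2\varphi^2}(X\varphi)^2$ is nonnegative because $\omega>-3/2$; and the potential term equals $-\frac{V(\varphi)}{2\varphi^2}$, which is nonnegative by assumption (b). Hence ${\widetilde{\rm Ric}}(X,X)\ge 0$ for every timelike $X$, i.e. $\tilde g$ satisfies the timelike convergence condition.

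Second, since $g$ and $\tilde g$ are conformally related they share the same causal structure, so $S$ is also a compact spacelike Cauchy surface for $(M,\tilde g)$, and I would read hypothesis (c) as a statement about the $\tilde g$-mean curvature, $H>0$ on $S$; by compactness $H\ge H_0>0$ for some constant. The time-reversed Hawking theorem---equivalently, the $f\equiv 0$ case of Theorem \ref{theorem4.1} applied to $\tilde g$, which makes no use of the Bakry-\'Emery tensor---then shows that every timelike $\tilde g$-geodesic is past incomplete, giving the first conclusion. I would record the sharper byproduct of the focusing argument: there is a uniform constant $t_0$ (with $n=4$ one may take $t_0=(n-1)/H_0=3/H_0$) such that every past-directed maximal $\tilde g$-geodesic leaving $S$ orthogonally acquires a focal point within $\tilde g$-proper time $t_0$. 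Because such maximal geodesics are orthogonal to $S$ and focal-point free in their interior, this yields the uniform distance bound $\tilde d(q,S)\le t_0$ for all $q\in J^-(S)$.

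For the furthermore, assume $\varphi\ge\varphi_0>0$ and let $\gamma$ be a past-directed, past-inextendible timelike $g$-geodesic; as $S$ is Cauchy I may take $\gamma$ to start on $S$ and run into $J^-(S)$. For any point $p=\gamma(\tau)$ the portion of $\gamma$ from $p$ back to $S$ is a timelike curve, so its $\tilde g$-length is at most $\tilde d(p,S)\le t_0$; letting $p$ range over $\gamma$ shows the entire past portion of $\gamma$ has $\tilde g$-length at most $t_0$. The conformal relation $\tilde g=\varphi g$ gives, along any timelike curve, $d\tilde\tau=\sqrt{\varphi}\,d\tau\ge\sqrt{\varphi_0}\,d\tau$, where $\tau$ and $\tilde\tau$ denote $g$- and $\tilde g$-arclength. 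Hence the $g$-proper time of the past portion of $\gamma$ is at most $t_0/\sqrt{\varphi_0}<\infty$, and since a past-complete unit-speed $g$-geodesic would accumulate infinite past proper time, $\gamma$ must be past $g$-incomplete.

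The step I expect to be the main obstacle is the last one, because geodesics are not conformally invariant: a $g$-geodesic need not be a $\tilde g$-geodesic, so I cannot transfer incompleteness pointwise. The resolution is to argue entirely at the level of curve lengths and Lorentzian distance, using that the causal structure (and hence $J^-(S)$ and the distance-maximizing property) is common to both metrics, and that the uniform bound $\tilde d(\cdot,S)\le t_0$ controls the $\tilde g$-length of any past-directed timelike curve ending on $S$; the lower bound $\varphi\ge\varphi_0$ then converts this into a finite $g$-length. A secondary point to confirm is that the Hawking argument really delivers the uniform distance bound rather than merely the nonexistence of a complete $\tilde g$-geodesic, which it does precisely because maximizing geodesics from $S$ are orthogonal and focal-point free in their interior, so the focusing estimate caps their length by $t_0$.
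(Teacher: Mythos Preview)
Your proof is correct and follows essentially the same route as the paper's: verify $\widetilde{\rm Ric}(X,X)\ge 0$ termwise from (\ref{eq4.17}), invoke Theorem \ref{theorem4.1} with $f\equiv 0$ for the $\tilde g$-incompleteness, and then transfer to $g$ via the uniform bound on $\tilde d(\cdot,S)$ together with the conformal length relation $d\tilde\tau=\sqrt{\varphi}\,d\tau\ge\sqrt{\varphi_0}\,d\tau$. The only difference is cosmetic: the paper runs the last step by contradiction (a hypothetical past-complete $g$-geodesic would have infinite $\tilde g$-length, forcing the maximizing $\tilde g$-geodesics from points along it back to $S$ to be arbitrarily long), whereas you bound the past $g$-proper time directly by $t_0/\sqrt{\varphi_0}$.
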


\begin{proof}
Our assumptions imply that ${\widetilde {\rm Ric}}(X,X)\ge 0$ for all timelike $X$. Now invoke Theorem \ref{theorem4.1} with $f=0$ to prove that every past timelike ${\tilde g}$-geodesic is incomplete.

To prove that every past timelike $g$-geodesic is incomplete, consider the function $\psi$ that maps each point $q\in S$ to the ${\tilde g}$-length of the past inextendible timelike ${\tilde g}$-geodesic that begins (in the past directed sense of course) at $q\in S$ and meets $S$ orthogonally there. As none of these geodesics is ${\tilde g}$-complete, $\psi$ is finite-valued for each $p\in S$. Indeed, Lemma \ref{lemma2.1} can be applied here with $f$ set to zero (thus $k_p=f_p=0$ in the statement of that lemma), and shows that $\psi(p)\le t_p=1/\delta_p$ (see (\ref{eq2.13})). Then because $S$ is compact and smooth, $\delta_p\ge \delta>0$ on $S$ so $\psi\le 1/\delta$ on $S$.

But assume by way of contradiction that there is a past complete $g$-geodesic $\gamma:[0,\infty)\to M$ (we choose the parameter to increase to the past). Clearly this curve has infinite length as measured by $g$-proper time. Then by (\ref{eq4.16}) and assumption (b), it has infinite length as measured by
${\tilde g}$ as well. We may assume that $\gamma(0)\in S$. Choose an unbounded sequence $t_k>t_{k-1}$, so the points $p_k=\gamma(t_k)$ recede into the past along $\gamma$. Consider the ${\tilde g}$-maximal timelike curves $\zeta_k$ joining the $p_k$ to $S$. Then each $\zeta_k$ is a timelike ${\tilde g}$-geodesic which meets $S$ orthogonally and, being ${\tilde g}$-maximal, has ${\tilde g}$-length greater than or equal to the
${\tilde g}$-length of $\gamma:[0,t_k]\to M$.
But we just established that the ${\tilde g}$-length of $\gamma$ is unbounded, so the $\zeta_k$ form a sequence of past timelike ${\tilde g}$-geodesics that begin on $S$, are orthogonal to $S$, and have unbounded ${\tilde g}$-length, which contradicts that $\psi$ is bounded.
\end{proof}

Then is it possible that the $g$-incompleteness statement in Theorem \ref{theorem4.6} is merely a consequence of Theorem \ref{theorem4.8}? There are in fact differences in the assumptions and applicability of the two theorems. The main distinction between these theorems lies in the energy condition imposed on non-gravitational matter. Clearly
\begin{equation}
\label{eq4.19} T(X,X)-\frac12{\tilde g}(X,X) tr_{\tilde g}T=T(X,X)-\frac12 g(X,X) tr_gT\ ,
\end{equation}
so the strong energy condition holds on $(M,{\tilde g})$ iff it holds on $(M,g)$. However, Theorem \ref{theorem4.6} uses the $\omega$-energy condition (\ref{eq4.9}), which only agrees with the strong energy condition when $\omega\to\infty$. For many matter models (e.g., dust with positive energy density), ${\rm tr}_gT<0$. When this is true, then the $\omega$-energy condition is, for any finite $\omega>-1$, a strictly weaker condition than the strong energy condition, and so in these circumstances Theorem \ref{theorem4.6} is stronger. We note as well that while Theorem \ref{theorem4.8} applies for all $\omega>-3/2$ (and for $\omega=-3/2$ as well, but the Brans-Dicke scalar equation doesn't permit $\omega=-3/2$), Theorem \ref{theorem4.6} applies only when $\omega\ge -1$, so in this sense Theorem \ref{theorem4.6} is obviously weaker. Both the distinction in the energy conditions and the distinction in applicable $\omega$ values arise because the Brans-Dicke scalar equation (\ref{eq4.3}) is used to bring equation (\ref{eq4.2}) into a form suitable for Theorem \ref{theorem4.1}, while only a conformal transformation of equation (\ref{eq4.2}) is used to obtain Theorem \ref{theorem4.8}.

Another distinction is that the positivity assumption (c) $H>0$ on mean curvature in Theorem \ref{theorem4.8} transforms under ${\tilde g}\mapsto g=\frac{1}{\varphi}{\tilde g}$ to $H>-\frac{3}{2\phi}\nabla_{\nu}\phi$, which differs from equation (\ref{eq4.13}) in assumption (d) of Theorem \ref{theorem4.6}.

Finally, we note that the rigidity results contained in Theorems \ref{theorem1.3} and \ref{theorem1.5} can also be applied to Brans-Dicke theory. To see this, note that Lemma \ref{lemma4.5} also holds if all the ``$>$'' signs in equations (\ref{eq4.10}--\ref{eq4.12}) are replaced by ``$\ge$'' signs. Then if the open inequality (\ref{eq4.13}) is replaced with the closed inequality $H\ge -\frac{1}{\varphi} \nabla_{\nu}\varphi$ and if $(M,g)$ is assumed to be past timelike geodesically complete, the time-reversed version of Theorem \ref{theorem1.3} implies that the past of $S$ splits as $((-\infty,0]\times S,-dt^2\oplus h)$ and $\varphi$ is independent of $t$ on the past of $S$. Thus the past of $S$ is static. Of course, nothing here forces $\varphi$ to be constant on $S$. Likewise, if the open inequality (\ref{eq4.15}) is replaced by the closed inequality $H_f(S) \ge 3 -\frac{1}{\varphi} \nabla_{\nu}\varphi$ and if we assume that the past timelike geodesics orthogonal to $S$ are past complete, then the time-reversed version of Theorem \ref{theorem1.5} implies that the past of $S$ is conformally static, being isometric to the warped product $((-\infty,0]\times S,-dt^2\oplus e^{2t}h)$, and $\varphi$ is constant on the past of $S$.

\end{document}